\newcommand\R{\mathbb{R}}
\newcommand\N{\mathbb{N}}
\newtheorem{remark}{Remark}
\newcommand{\norm}[1]{\| #1 \|}
\newtheorem{algorithm}{Algorithm}
\title{Reduced basis methods for pricing options with the Black-Scholes 
 and Heston model}
\author{O. Burkovska\thanks{
Fakult\"at Mathematik, Technische Universit\"at M\"unchen, Germany; {\tt burkovsk@ma.tum.de}. The author acknowledges the support from the International Research Training Group IGDK1754, funded by the German Research Foundation (DFG).} 
\and
B. Haasdonk
\thanks{
Institut für Angewandte Analysis und Numerische Simulation, Universit\"at Stuttgart, Germany; {\tt haasdonk@mathematik.uni-stuttgart.de}. The author acknowledges the
German Research Foundation (DFG) for financial support of the project within the Cluster of Excellence in Simulation Technology (EXC 310/1) at the University of Stuttgart and the Baden-Württemberg Stiftung gGmbH.}
\and J. Salomon
\thanks{CEREMADE, Universit\'e Paris-Dauphine, France; {\tt julien.salomon@dauphine.fr}} 
\and B. Wohlmuth\footnotemark[1]
\thanks{
Fakult\"at Mathematik, Technische Universit\"at M\"unchen, Germany; {\tt wohlmuth@ma.tum.de}} 
}
\begin{document}
%
%
\maketitle
\begin{abstract}
In this paper, we present a reduced basis method for pricing European and 
American options based on the Black-Scholes and Heston model. To tackle 
each model numerically, we formulate the problem in terms of a
time dependent variational equality or inequality. We apply a suitable 
reduced basis approach for both types of options. 
The characteristic ingredients used in the method are a combined POD-Greedy
and Angle-Greedy procedure for the construction of the primal and dual 
reduced spaces. Analytically, we prove the reproduction property of the 
reduced scheme and derive a posteriori error estimators. Numerical examples 
are provided, illustrating the approximation quality and convergence of our 
approach for the different option pricing models. Also, we investigate the 
reliability and effectivity of the error estimators.
\end{abstract}
\section{Introduction}

We consider the problem of European and American option pricing and refer to~\cite{achdou,hilber,seydel} 
and the references therein for an introduction into computational methods for option pricing.
While European options can be modeled by a parabolic partial differential
equation, American options result in additional inequality constraints. Different models can be 
used to price European and American options. The simplest ones, e.g., the Black-Scholes 
model~\cite{black_scholes}, assume that the volatility is constant. However, in most of the cases, 
the real market violates this assumption due to its stochastic nature. Thus, alternative models, 
which try to capture this phenomenon are frequently used, e.g., the Heston stochastic volatility 
model~\cite{heston}. 

Another difficulty which arises with solving partial differential equations (PDEs) 
for option pricing, in particular for pricing American options, is that for most of the models no closed form solution exists. Thus one has to develop appropriate numerical methods. The common methods to 
solve pricing equations with the Heston model are finite differences, cf.~\cite{Galiotos08,hout2010,ikonen2008,ikonen2009} and finite elements, cf.~\cite{feng2011,kunoth2012,winkler2001, Zvan98}.  
We refer to~\cite{wohlmuth}  for a possible numerical treatment of basket options with the Black-Scholes model by primal-dual finite elements and to~\cite{geiger,glowinski,kinderlehrer} for an abstract framework on the theory of constrained variational
problems.

We are interested in providing fast numerical algorithms to accurately solve
the variational
equality and inequality systems associated with European and American call and put options 
for a large variety of different parameter values such as, e.g., interest rate, dividend and correlation. 
Reduced basis (RB) methods are an appropriate means
for standard parametrized parabolic partial differential equations,
cf. \cite{buffa,grepl2005reduced,haasdonk08,rozza,veroy}
and the references therein. These techniques are based on
low-dimensional approximation spaces, that are constructed
by greedy procedures. Convergence behavior is known in some cases
\cite{buffa, Ha13}.
The computational advantage of RB-methods over standard discretization 
methods 
is obtained by its possible offline/online decomposition: First, a typically 
expensive offline-phase involving the computation of the reduced spaces is 
performed. This phase only needs to be precomputed once. Then, the online phase allows an extremely fast computation of the RB
solutions for many new parameters 
as only low dimensional systems need to be solved. Recently, we adopted the RB methodology to
constrained stationary elliptic problems \cite{HSW12}, which
we extend here to the instationary case.

We refer to the recent contribution~\cite{pironneau} for a
tailored RB approach in option pricing with diffusion and jump-diffusion models, which later was 
generalized to basket options~\cite{pironneau11} and, in fact, was shown to be a variant of 
a Proper Orthogonal Decomposition method (POD)~\cite{pironneau12}. The application of the 
reduced basis method can be also extended to the calibration of option pricing models, 
e.g.,~\cite{pironneau09,sachs2008}. In contrast to our setting, no inequality constraints are
taken into account there. 
Further work relevant for RB-methods and variational inequalities comprises
\cite{UG13}, which addresses a time-space formulation of the problem and corresponding 
analysis. Also, recently, ongoing work has been presented in~\cite{VeroyParisWorkshop}, 
which alternatively treats the inequality constraints both by a primal-dual technique 
and a penalization approach.

One main challenge in our problem setting is the construction of a suitable low dimensional 
approximation of the dual cone required for the 
treatment of the constraints. In this work, we present an algorithm to overcome this difficulty 
which is based on the greedy procedure and tries to capture as much ``volume'' as possible in 
the construction of the dual cone. This is obtained by iteratively selecting snapshots maximizing the angle to the current space. As second main ingredient for treatment of 
additional inequality constraints, we provide analytical results, in particular 
a posteriori error control. 

Let us briefly relate the current presentation to our previous works. 
In contrast to \cite{HSW12}, 
which dealt with RB methods for stationary variational inequalities,
we treat here instationary problems. In this way, we apply sharper a posteriori error 
estimator strategies, which can straightforwardly be applied to the stationary case and 
gives improvements over \cite{HSW12}.
In \cite{HSW12b} we presented an RB procedure for American option pricing with a Black-Scholes 
model and gave first simple examples. The current presentation considerably extends this by 
including the Heston model and both European and American options and providing an analysis 
for the corresponding RB scheme. 
 

The content of this paper is structured as follows: In Section~\ref{sec:var_form}, we present both a strong and a variational formulation of pricing of European and American options with the Black-Scholes and Heston model. In Section~\ref{sec:rb}, a reduced basis method is introduced together with the construction of the primal and dual reduced basis spaces. Section~\ref{sec:error_analysis} contains the a posteriori error analysis induced by the method and derived from equality and inequality residuals.
The implementational aspects of the reduced basis method together with the construction algorithms for reduced basis spaces are presented in Section~\ref{sec:implementation}. Numerical results, given in Section~\ref{sec:results}, illustrate the performance of the method for pricing options in one dimensional Black-Scholes and two dimensional Heston models.

\section{Variational formulations for European and American Options}\label{sec:var_form}
\subsection{Option Pricing Models}
In this section, we give a brief introduction to the theory of option pricing. An option is a contract which permits its owner the right to buy or sell an underlying asset (a stock, or a parcel of shares)  at a prespecified fixed strike price $K\geq 0$  before or at a certain  time $T\geq 0$, called maturity. There are two basic types of an option: a {\em call} option which gives a holder a right to buy and a {\em put} option which allows an owner to sell an underlying asset. Also one distinguishes between European options, where exercise is only permitted at maturity $T$ and  American options which can be exercised at any time before an expiration time $T$. We will denote the price of the underlying asset by $S=S_\tau\in\mathbb{R}_+$, where $\tau\geq 0$ is the time to maturity $T$.

In standard option pricing models, e.g., the Black-Scholes model~\cite{achdou,black_scholes,hull93}, a price of the underlying asset $S_\tau$ follows a stochastic process,  governed by the following stochastic differential equation
\begin{eqnarray}
dS_\tau = \iota S_\tau dt+\sigma_\tau S_\tau dW_\tau,
\end{eqnarray}
with a Wiener process $W_\tau$, a drift $\iota$ and a volatility $\sigma_\tau > 0$. One of the main limitations of these models is the assumption that the volatility of the return on the underlying asset is constant $\sigma_\tau = \sigma$, while in financial markets, the volatility  is not a constant, but a stochastic variable. The Heston stochastic volatility model~\cite{heston} takes into account the randomness of the volatility and is based on the following stock price and variance dynamics
\begin{eqnarray}
dS_\tau &=& \iota S_\tau dt+\sqrt{v_\tau}S_\tau dW_\tau^1,\\
\label{eq:varprocess}
dv_\tau &=& \kappa(\gamma-v_\tau)d\tau + \xi\sqrt{v_\tau}dW_\tau^2,
\end{eqnarray}
where $v_\tau=\sigma^2$  follows a square root process (known as a Cox-Ingersoll-Ross (CIR) process) with the mean variance $\gamma>0$, rate of mean reversion $\kappa>0$ and so called volatility of volatility $\xi>0$. The Wiener processes $W_\tau^1$ and $W_\tau^2$ are correlated with the correlation parameter $\rho\in [-1,1]$. 

With the use of Ito's formula~\cite{hull93}, each model can be formulated in terms of a partial differential equation (PDE). For purposes of brevity, we omit the derivation of the equations. The reader is referred to~\cite{achdou,heston,hilber} for further details.

We define the spatial differential operators corresponding to the Black-Scholes and Heston model as follows
\begin{align}
\mathcal{L}^{BS}P:&=\frac{1}{2}\sigma^{2}S^2\partial_{SS}P+rS\partial_{S}P-(r-q)P,\label{eq:BS}\\
\mathcal{L}^{H}P:&=\frac{1}{2}\xi^{2}v\partial_{vv}P+\rho\xi vS\partial_{vS}P+\frac{1}{2}vS^{2}\partial_{SS}P+\kappa(\gamma-v)\partial_{v}P+rS\partial_{S}P-rP,\label{eq:Heston}
\end{align}
where  $r$ is the interest rate and $q$ is the dividend payment.
Then the value of an option $P(\tau,S)$ in the Black-Scholes model and $P(\tau,v,S)$ in the Heston model paying $P_0(S)$ at maturity time $T$ must satisfy the following partial differential equation 
\begin{eqnarray}
\partial_\tau P+\mathcal{L}P = 0,\label{eq:Heston1}
\end{eqnarray}
where the value $P_0(S)$ is called a payoff function and $\mathcal{L}:=\mathcal{L}^{s}$, $s=\{BS,H\}$ for the Black-Scholes and the Heston model, respectively. 

\begin{remark}\label{remark:BS}
 Assuming a constant volatility in the Heston model and no dividend payment in the Black-Scholes model, $q=0$, we have $v=\gamma$ and $\xi=0$, and the Heston equation reduces to the Black-Scholes equation with the  constant volatility $\sigma=\sqrt{v}$.
\end{remark}

For further derivation of the weak formulation of the problem (\ref{eq:Heston1}), we introduce the following
notation.
Let $\Omega\subset\mathbb{R}^d$, $d=1,2$, be a bounded open domain with Lipschitz continuous boundary $\partial\Omega$. We consider $d=1$ for the Black-Scholes model and $d=2$ for the Heston model. 
Note that this is not a limitation for the RB-approach that we present. 
Higher dimensional problems can readily be treated, as soon as suitable solvers 
for the discretized PDE are available.   
We introduce the following functional space
\begin{align}
V:=\left\{ \phi\in H^1(\Omega):  \ \ \phi=0 \ \text{ on  }\partial \Omega_D \right\}, &
\end{align}
where $\partial\Omega_D$ denotes a Dirichlet portion of the boundary $\partial \Omega$. Further $\langle\cdot, \cdot \rangle_{V}$ and $\norm{\cdot}_V$ denote 
the inner product and norm of $V$, similar for other spaces. In the error analysis, we make use of a norm bound, i.e. we denote by $C_\Omega$ a constant that 
satisfies, for any $v\in V$: 
\begin{equation}\label{ineq:normequi}
 \|v\|_{L^2(\Omega)}\leq C_{\Omega} \|v\|_V.
\end{equation}
In the experiments, we use $H^1$ or weighted $H^1$ norms for $V$, but for the
sake of generality of our analysis, we introduce this generic constant that includes other frameworks, e.g., $H^1_0$ norm
for which $C_\Omega$ is the Friedrichs-Poincaré constant.

We define a backward time variable $t:=T-\tau$ which we will use throughout the paper. This transforms the PDE (\ref{eq:Heston1}) into a standard forward evolution problem. We denote by $(\cdot)_+:=\max(0,\cdot)$, by ``$\circ$" the Hadamard product and  
introduce a parameter $\mu\in\mathcal{P}\subset\mathbb{R}^p$, $p=3,5$, which parametrizes (\ref{eq:Heston1}). We set $\mu:=(\sigma,q,r)$ for the Black-Scholes model and $\mu:=(\xi,\rho,\gamma,\kappa,r)$ for the Heston model.

\subsection{Pricing of European options}\label{sec:EO}

We start our consideration with the simplest case of pricing the European type of options. Since puts and calls of the European option can be easily interchanged via a put-call parity relation~\cite[p. 163]{hull93}, it suffices for us to consider, e.g., only call options. In addition, there exists a semi-closed analytical solution for these options in the Black-Scholes and Heston models. However, while the Black-Scholes formula~\cite{black_scholes} provides almost the exact value of the European option, the Heston semi-analytical formula~\cite{heston} requires some numerical techniques to approximate the integral. Thus, it is more interesting to consider the case of the Heston model. Then the value of a European call option $P(t,v,S)$, $(v,S)\in\mathbb{R}^2_+$ (the relation to $\Omega$ being established soon) satisfies the following linear equation
\begin{align}
\partial_tP-\mathcal{L}^{H}P=0, \ \ \  \ \ t\in (0,T],
\label{eq:EO1}
\end{align}
subject to initial and boundary conditions
\begin{align}
\nonumber &P(0,v,S) =(S-K)_{+}, &&\lim_{S\rightarrow 0}P(t,v,S) &&= 0,\\
\nonumber &\lim_{S\rightarrow+\infty}P_{S}(t,v,S) = 1, &&\lim_{v\rightarrow+\infty}P(t,v,S) && = S,\\
&rP(t,0,S) = rSP_{S}(t,0,S)+\kappa\gamma P_{v}(t,0,S)+P_{t}(t,0,S).
\end{align}
The description of the boundary conditions can be interpreted as follows: For a large stock price $S$, we use a Neumann boundary condition which establishes a linear growth of an option price. When the volatility $v$ is 0, we cannot impose any boundary conditions and assume that the equation (\ref{eq:EO1}) is satisfied  on the line $v=0$. When the stock price is worthless $S=0$, it is  natural to assume that the value of the call is also worthless. The option price is increasing with the volatility but it remains bounded by the stock price, hence when the volatility $v$ is large, we assume the value of the option tends to reach the value of the stock price $S$. Note, this is not the only way to prescribe boundary conditions for the problem, other types of boundary conditions can be found, e.g., in ~\cite{duffy06,Galiotos08,Zvan98}.
\begin{remark}
The variance process (\ref{eq:varprocess}) is strictly positive if the condition on the parameters $\xi^2<2\kappa\gamma$ is satisfied, which is often referred as the Feller condition, see, e.g.,~\cite{janek2011fx}. This condition plays a crucial role in the calibration process of the Heston model, and it is uncommon that the parameters violate it. Thus we restrict ourselves to the choice of model parameters, such that the Feller condition is fulfilled. 
\end{remark}

Since the operator $\mathcal{L}^H$ in (\ref{eq:Heston}) is a degenerate parabolic differential operator, the standard way to eliminate the variable coefficient $S$ is to perform the log-transformation of $S$ by introducing a new variable $x:=\log\left(\frac{S}{K}\right)$. Then we are looking for the solution $w(t,v,x):=P(t,v,\log(\frac{S}{K}))$, with the initial condition $w^0(x):=w(0,v,x)=(Ke^x-K)_+$  which satisfies the transformed Heston equation 
\begin{eqnarray}
\label{eq:Heston_tr}
\partial_tw -L^Hw=0,
\end{eqnarray} 
for all $(v,x)\in\mathbb{R}_+\times (-\infty,+\infty)$. The operator $L^H$ corresponds to the operator $\mathcal{L}^H$ in (\ref{eq:Heston}) with respect to a change of variables and is defined as follows
\begin{eqnarray}\label{eq:operatorH}
 L^H w:=\nabla\cdot A\nabla w-b\cdot\nabla w-rw,
\end{eqnarray}
with
\begin{eqnarray}\label{eq:heston_coef}
A:=\frac{1}{2}v\begin{bmatrix}
\xi^2 &\rho\xi \\ \rho\xi &1
\end{bmatrix}, \ \ \ \ \ \ \ \ \ \ \ \ 
b:=\begin{bmatrix}
-\kappa(\gamma-v)+\frac{1}{2}\xi^2 \\ -r+\frac{1}{2}v+\frac{1}{2}\xi\rho
\end{bmatrix}
\end{eqnarray}
and first oder spatial differential operator $\nabla :=\left(\partial_v ,\partial_x \right)^T$.

To perform a numerical simulation, we localize the problem (\ref{eq:Heston_tr}) to a bounded computational domain $\Omega=(v_{\min}, v_{\max})\times(x_{\min}, x_{\max})\subset\mathbb{R}^2$ with the variance  $v_{\min}>0$. Then the boundary conditions transform into
\begin{eqnarray*}
\Gamma_1: \ &v=v_{min}\ \ \ \ &w(t,v_{min},x)=Ke^{x}\Phi(d_+)-Ke^{-rt}\Phi(d_-),\\
\Gamma_2: \ &v=v_{max}\ \ \ \ &w(t,v_{max},x)=Ke^{x},\\
\Gamma_3: \ &x=x_{min}\ \ \ \ &w=\lambda w(t,v_{max},x_{min})+(1-\lambda)w(t,v_{min},x_{min}), \\
& &\lambda=\frac{v-v_{min}}{v_{max}-v_{min}},\\
\Gamma_4: \ &x=x_{max}\ \ \ \ &A\frac{\partial w}{\partial\overline{n}}(t,v,x_{max})=\frac{1}{2}vKe^{x},
\end{eqnarray*}
with $\overline{n}$ being an outward normal vector at the boundary,
$\sigma=\sqrt{v}$ and 
$d_{\pm}$ and a cumulative distribution function  $\Phi(x)$ defined as
$$d_{\pm}=\frac{x+(r\pm\frac{\sigma^2}{2})t}{\sigma\sqrt{t}}, \quad
\Phi(x)=\frac{1}{\sqrt{2\pi}}\int_{-\infty}^x e^{-\frac{z^2}{2}}dz.$$ 
 Using the test function $\phi\in V$ yields the following weak formulation of (\ref{eq:Heston_tr})
\begin{eqnarray}
 \langle\partial_t w,\phi\rangle_{L^2(\Omega)} + a^H(w,\phi;\mu)=f^E(\phi;\mu),\ \ \ \forall \phi\in V,
\end{eqnarray}
where 
\begin{eqnarray}
 a^H(w,\phi;\mu)&:=&\langle A\nabla w,\nabla\phi\rangle_{L^2(\Omega)}+\langle b\cdot\nabla w,\phi\rangle_{L^2(\Omega)}+r\langle w,\phi\rangle_{L^2(\Omega)},\label{eq:bilHeston}\\
f^E(\phi;\mu)&:=&\langle A\frac{\partial w}{\partial\overline{n}},\phi\rangle_{L^2(\Gamma_4)}.
\end{eqnarray}
Various methods can be applied to solve this problem numerically, e.g.,~\cite{hout2010, janek2011fx, winkler2001}. 

\subsection{Pricing of American options}\label{sec:AO}
 We extend our considerations to American options based on the Black-Scholes and Heston model, for which, in general, a closed form solution does not exist. Since American calls are equal to European calls on non-dividend paying stocks~\cite[p. 159]{hull93}, we will focus only on American put options in both models. 
Following the same arguments as in Section \ref{sec:EO}, we consider the price of the American put in the  log-transformed variable $w(x):=P(Ke^x)$ with a log-transformed payoff $w^0(x):=(K-Ke^x)_+$ which solves the following problem 
\begin{eqnarray}
\partial_tw-Lw\geq 0, \ \ \ \ \  w-w^0&\geq& 0,\label{eg:AO1_trans}\\
\left(\partial_tw-Lw\right)\cdot\left(w-w^0\right)&=&0\label{eq:AO2_trans}.
\end{eqnarray}
The operator $L:=L^s$, $s=\{BS,H\}$ for the Black-Scholes or Heston model, where
\begin{eqnarray}\label{operatorBS}
 L^{BS}w:=\frac{1}{2}\sigma^2\partial^2_{xx}w+(r-q)\partial_xw-rw. 
\end{eqnarray}
The boundary conditions for the Heston model are set to $w=w^0(x)$ on $\partial\Omega_D:=\Gamma_1\cup\Gamma_3\cup\Gamma_4$ and $\frac{\partial w}{\partial \overline{n}}=0$ on $\Gamma_2$. For the Black-Scholes model we 
define $\partial\Omega_D:=\{x_{\min}, x_{\max} \}$ 
and on this boundary we prescribe  $w=w^0(x)$. 

Our aim is to reformulate the system (\ref{eg:AO1_trans})--(\ref{eq:AO2_trans}) in a variational saddle point form~\cite{kikuchi}. Define  $W:=V'$ to be the dual space of $V$ and 
$M\subset W$ to be a dual cone. 
For all $\eta\in W$, $v\in V$ introduce a duality pairing 
$b:W\times V\to \mathbb{R}$, $b(\eta,v)=\langle\eta,v\rangle_{V^\prime,V}$ and  $\widetilde{g}(\eta;\mu):=b(\eta,w^0)$. 
The bilinear form of the Black-Scholes equation reads
\begin{eqnarray}
 a^{BS}(w,\phi;\mu)&:=&\frac{1}{2}\sigma^2\langle\partial_xw,\partial_x\phi \rangle_{L^2(\Omega)}-(r-q)\langle\partial_xw,\phi\rangle_{L^2(\Omega)}+r\langle w,\phi\rangle_{L^2(\Omega)}. \quad \label{eq:bilBS}
\end{eqnarray}
To treat the problem numerically, we use a $\theta$-scheme for the discretization in time 
 and conforming piecewise linear finite elements 
for the discretization in spatial direction. We divide $(0,T]$ into $L$ subintervals  of equal length $\Delta t:=\frac{T}{L}$ and define $w^n:=w(t^n,v,x)\in H^1(\Omega)$, $t^n:=n\Delta t$, $0< n\leq L$.  In order to ensure Dirichlet boundary conditions, we set $w^n=u^n+u_g^n$, where $u^n:=u(t^n,\cdot,\cdot)\in V$ solves~\eqref{eg:AO1_trans}--\eqref{eq:AO2_trans} with homogeneous Dirichlet boundary conditions and $u_g^n:=u_g(t^n,\cdot,\cdot)\in H^1(\Omega)$ is a Dirichlet lift function, which extends non-homogeneous boundary conditions to the interior of the domain. For $0<n\leq L-1$ we introduce the operators
\begin{align}
 f^n(\phi;\mu)&:=-\left\langle\frac{u_g^{n+1}-u_g^n}{\Delta t},\phi\right\rangle_{L^2(\Omega)}-a(\theta u^{n+1}_g+(1-\theta)u_g^n,\phi;\mu),\\
 g^n(\eta-\lambda^{n+1};\mu)&:=\widetilde{g}(\eta-\lambda^{n+1};\mu)-b(\eta-\lambda^{n+1},u_g^{n+1}),
\end{align}
and the discrete problem in a saddle point form reads: 
\begin{definition}[Detailed problem]
 For $\mu\in\mathcal{P}$ and given initial data $u^0 \in V$ find 
$(u^{n+1}(\mu),\lambda^{n+1}(\mu))\in V\times M$ for $0<n\leq L-1$ and $\phi \in V,\eta \in M$ 
satisfying 
\begin{align}
&\left\langle\frac{u^{n+1}-u^n}{\Delta t},\phi\right\rangle_{L^2(\Omega)}+a(\theta u^{n+1}+(1-\theta)u^n,\phi;\mu) -b(\lambda^{n+1},\phi)  =  f^n(\phi;\mu),
\label{eq:AOdisc1}\\
&b(\eta-\lambda^{n+1},u^{n+1})  \geq g^n(\eta-\lambda^{n+1};\mu). \label{eq:AOdisc2} 
\end{align}
\end{definition}
The bilinear form $a(\cdot,\cdot;\mu):=a^s(\cdot,\cdot;\mu)$, $s=\{BS,H\}$ is specified for each model in~\eqref{eq:bilBS} or~\eqref{eq:bilHeston}. Here and in the following, we frequently omit 
the argument $\mu$ whenever the parameter value is clear from the context. 
The problem (\ref{eq:AOdisc1})--(\ref{eq:AOdisc2}) can be considered as a model independent formulation for pricing American put options. Moreover, the European call option is also enclosed in this formulation by the exchange of $f^n(\cdot;\mu)$ with $f^n(\cdot;\mu)+f^E(\cdot;\mu)$, initial conditions and omitting $b(\cdot,\cdot)$ and $g^n(\cdot,\mu)$ terms. Therefore, in the further discussions of the implementation aspects and analysis, we will  focus only on the general American put option case, and we will not distinguish the models which are used to price the option. However in Section \ref{sec:results}, we present and compare numerical results for European and American options in both models. 

By a generalized Lax-Milgram argument, a problem of type~(\ref{eq:AOdisc1})--(\ref{eq:AOdisc2}) is well-posed if the bilinear form $a(\cdot,\cdot,\mu)$ is continuous and coercive, $f^n(\cdot,\mu)$, $g^n(\cdot;\mu)$ are linear and continuous and $b(\cdot,\cdot)$ is inf-sup stable. In particular, for the Heston model, if $v\geq v_{\min}>0$ the matrix $A$ in (\ref{eq:heston_coef}) is positive definite on $\overline{\Omega}$ and under the suitable relation on the coefficients, we obtain the coercivity and continuity of $a(\cdot,\cdot;\mu)$. This issue for the European call option was studied in great detail in~\cite{winkler2001}. The well-posedness of the problem in the Black-Scholes settings can be found, e.g., in~\cite[Ch. 6]{achdou}. The coercivity, continuity, and inf-sup constants are defined as follows
\begin{align}
 \label{eq:infsup}
 \alpha_a(\mu)&:=\inf_{u\in V}\frac{a(u,u;\mu)}{\|u\|_V^2}>0,
 \ \ \ \ \gamma_a(\mu):=\sup_{u\in V}\sup_{v\in V}\frac{a(u,v;\mu)}{\|u\|_V\|v\|_V}<\infty,  \ \ \ \forall\mu\in\mathcal{P},\\
 \beta &:=\inf_{\eta\in W}\sup_{v\in V}\frac{b(\eta,v)}{\|\eta\|_W\|v\|_V}>0.
\end{align}
Also, for our choice of a dual space and cone we assume
\begin{eqnarray}
\label{eq:star}
 \langle\eta,\eta^\prime\rangle_W\geq 0, \quad\forall\eta,\eta^\prime\in M.
\end{eqnarray}

\section{Reduced basis setting}\label{sec:rb}
In this section, we provide the RB-scheme for the variational inequality 
problem and present the main analytical results.
\subsection{Reduced basis discretization}
Standard finite element approaches do not exploit the structure of the
solution manifold under parameter variation and for a given parameter value, 
a high dimensional system has to be solved. In what follows, we introduce a
specific Galerkin approximation of the solution, based on the reduced
basis method. 
The first step of the reduced basis method mainly consists in computing parametric  
solutions in low dimensional subspaces of $V$ and
$W$, defined in Section~\ref{sec:var_form}, that are generated 
with particular solutions, the so-called {\it snapshots}, of our problem.

Let us explain the corresponding formulation in more detail.
For $N\in\N$, consider a finite subset ${\cal
P}_N:=\left\{\mu_1,\ldots,\mu_N \right\}\subset {\cal P}$ with
$\mu_i\neq\mu_j,\ \forall i\neq j$. The reduced spaces
$V_N$ and $W_N$ are defined by $V_N:={\rm
  span}\left\{\psi_1,\ldots,\psi_{N_V}\right\}$ and $W_N:={\rm
  span}\left\{\xi_1,\ldots,\xi_{N_W}\right\}$ 
where $\psi_i \in V$ and $\xi_i \in M$ are suitably constructed from
the large set of snapshot solutions $u^n(\mu_i)$, $i= 1, \ldots, N
$, $n = 0, \ldots, L$  and $\lambda^n(\mu_i)$, $i= 1, \ldots, N
$, $n = 1, \ldots, L$
and the reduced dimensions 
$N_V,N_W$ are preferably small.
Both
families $\Psi_N=(\psi_j)_{j=1,\ldots,N_V}$ and $\Xi_N=(\xi_j)_{j=1,\ldots,N_W}$ are supposed to be composed of
linearly independent functions, hence are so-called {\it reduced bases}.
Numerical algorithms to build these two sets will be presented in Section~\ref{sec:RBcomp}. 
We define the reduced cone as
$$M_N=\left\{ \sum_{j=1}^{N_W} \alpha_j\xi_j,\ \alpha_j\geq
0\right\},$$
which satisfies $M_N\subseteq M$ due to the assumption of $\xi_i\in M$.
In this setting, the reduced problem reads:
\begin{definition}[Reduced problem]
  Given $\mu\in{\cal
P}$, 
find $u_N^{n+1}(\mu)\in V_N$ and $\lambda_N^{n+1}(\mu)\in M_N$ 
for $0\leq n\leq L-1$ 
that satisfy 
\begin{align}\label{eq:VIRB1} 
  &\left\langle\frac{u_N^{n+1}-u_N^n}{\Delta t},v_N\right\rangle_{L^2(\Omega)} +a(\theta
  u_N^{n+1}+(1-\theta)u_N^n,v_N;\mu) - b(\lambda_N^{n+1},v_N) = f^n(v_N;\mu),\\ 
&b(\eta_N-\lambda_N^{n+1},u_N^{n+1})\geq g^n(\eta_N-\lambda_N^{n+1};\mu), \label{eq:VIRB2} 
\end{align}
for all $v_N\in V_N, \eta_N \in M_N$ and the initial value $u_N^0$ is chosen as an orthogonal projection
of $u^0$ on $V_N$, i.e., $\langle u_N^0-u^0,v_N \rangle_V=0$ for all $v_N\in V_N$.
\end{definition}
\subsection{Existence, uniqueness and reproduction property}
By the construction procedure for $V_N,W_N$ in Section~\ref{sec:implementation}, 
we will assure inf-sup 
stability of $b(\cdot,\cdot)$ on $W_N\times V_N$ and guarantee that $\beta_N\geq\beta>0$, where
\begin{eqnarray}
 \beta_N:=\inf_{\eta_N\in W_N}\sup_{v_N\in V_N}\frac{b(\eta_N,v_N)}{\|\eta_N\|_{W_N}\|v_N\|_{V_N}}, \label{eqn:reduced-inf-sup}
\end{eqnarray}
which implies the well-posedness of our 
reduced problem~(\ref{eq:VIRB1})--(\ref{eq:VIRB2}) with identical arguments 
as for the detailed saddle point problem.
Hence, we ensure existence and uniqueness of the reduced solution. For the details of the proof, we refer to~\cite{boffi,HSW12,rozza}.
A further useful property is a basic consistency argument, the 
reproduction of solutions:
\begin{lemma}[Reproduction of Solutions]
If for some $\mu$ holds $u^{n+1}(\mu) \in V_N$ and $\lambda^{n+1}(\mu)\in M_N$ 
for $0<n\leq L-1$ and $u^0(\mu)\in V_N$ then 
$$
   u_N^{n+1}(\mu) = u^{n+1}(\mu), \quad 
   \lambda_N^{n+1}(\mu) = \lambda^{n+1}(\mu), \quad \forall n=1,\ldots, L-1. 
$$
\end{lemma}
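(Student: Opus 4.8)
The plan is to argue by induction over the time level $n$. The only structural facts needed are that the reduced spaces are conforming, $V_N \subseteq V$ and $M_N \subseteq M$, and that the reduced saddle point problem~(\ref{eq:VIRB1})--(\ref{eq:VIRB2}) has a unique solution, which was already established from $\beta_N \geq \beta > 0$ and the coercivity of $a(\cdot,\cdot;\mu)$.

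For the base case I would note that $u_N^0$ is defined as the $V$-orthogonal projection of $u^0$ onto $V_N$; since $u^0(\mu) \in V_N$ by assumption, this projection acts as the identity, so $u_N^0 = u^0$. For the induction step, assume $u_N^n = u^n$. By hypothesis $u^{n+1}(\mu) \in V_N$ and $\lambda^{n+1}(\mu) \in M_N$. Restricting the detailed equality~(\ref{eq:AOdisc1}) to test functions $\phi = v_N \in V_N \subseteq V$ and using $u^n = u_N^n$, the pair $(u^{n+1},\lambda^{n+1})$ satisfies the reduced equality~(\ref{eq:VIRB1}) for all $v_N \in V_N$. Likewise, restricting the detailed inequality~(\ref{eq:AOdisc2}) to $\eta = \eta_N \in M_N \subseteq M$ shows that $(u^{n+1},\lambda^{n+1})$ satisfies~(\ref{eq:VIRB2}) for all $\eta_N \in M_N$. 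Hence $(u^{n+1},\lambda^{n+1}) \in V_N \times M_N$ solves the reduced problem at level $n+1$, and by uniqueness it must coincide with $(u_N^{n+1},\lambda_N^{n+1})$, which closes the induction.

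I do not anticipate a genuine obstacle; this is a standard consistency argument. The two points that warrant care are (i) the conformity $M_N \subseteq M$, which is what legitimizes shrinking the test cone in the variational inequality and what ensures $\lambda^{n+1}$ is an admissible reduced multiplier, and (ii) the appeal to uniqueness of the reduced solution, which rests on the reduced inf-sup stability~(\ref{eqn:reduced-inf-sup}) guaranteed by the construction of $V_N$ and $W_N$ in Section~\ref{sec:implementation}. No quantitative estimates enter the argument.
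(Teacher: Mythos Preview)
Your argument is correct and essentially identical to the paper's own proof: both proceed by induction on the time level, use the orthogonal-projection characterization of $u_N^0$ for the base case, restrict the detailed problem to test functions $v_N\in V_N\subseteq V$ and $\eta_N\in M_N\subseteq M$ for the induction step, and conclude by uniqueness of the reduced saddle point solution. A minor cosmetic difference is that the paper also carries $\lambda^n=\lambda_N^n$ in the induction hypothesis, though, as you implicitly observe, only $u^n=u_N^n$ is actually needed to pass to step $n+1$.
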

\begin{proof}
 We prove this property by induction. For $n=0$, $u^0\in V_N$ and for $u^0_N\in V_N$ we have $\langle u_N^0-u^0,v_N\rangle_V=0$, for all $v_N\in V_N$. Set $v_N=u_N^0-u^0$, then 
 \begin{equation}\nonumber
 \langle u_N^0-u^0,u_N^0-u^0\rangle_V=0,
 \end{equation}
which is true only for $u^0_N=u^0$. 
For the induction step, we assume that $u^n=u^n_N$, $\lambda^n=\lambda_N^n$. 
Then, choosing $v=v_N\in V_N\subset V$, $\eta= \eta_N\in M_N\subset M$, we 
directly obtain
\begin{align*}
  &\left\langle\frac{u^{n+1}-u_N^n}{\Delta t},v_N\right\rangle_{L^2(\Omega)} +a(\theta
  u^{n+1}+(1-\theta)u_N^n,v_N;\mu) - b(\lambda^{n+1},v_N) = f^n(v_N;\mu),\\ 
&b(\eta_N-\lambda^{n+1},u^{n+1})\geq g^n(\eta_N-\lambda^{n+1};\mu), 
\end{align*}
which implies that $(u^{n+1},\lambda^{n+1})\in V_N\times M_N$ solves the reduced problem \eqref{eq:VIRB1}--\eqref{eq:VIRB2}. Due to the uniqueness of the solution, we obtain $u_N^{n+1}=u^{n+1}$ and $\lambda_N^{n+1}=\lambda^{n+1}$.
\end{proof}

\section{A posteriori error analysis}\label{sec:error_analysis}
In this section, we present an a posteriori analysis of our RB-scheme. 
In particular, only the more challenging constraint case 
of American options is considered, as for the European option model, 
the well established standard RB-error analysis for linear parabolic 
problems \cite{grepl2005reduced,GP05,GernerVeroy2012} can be applied, 
which we omit in this work.

Again, to simplify the presentation, we omit the parameter vector $\mu$ in the notation of $a,f^n,g^n$. 
In the same way, we only consider the case $\theta=1$, that is, an implicit Euler time discretization. However the analysis presented hereafter holds for any $\theta \in (0,1]$, up to technical supplementary computations. 
We start by introducing relevant residuals and preliminary results.
\subsection{Residuals and preliminary results}
In order to evaluate the approximation errors induced by our method,
we define, for $n=0,\cdots,L-1$, the equality and inequality residuals by:
\begin{align}
 r^{n}(v)&:= \left\langle\frac{u_N^{n+1}-u_N^n}{\Delta t},v\right\rangle_{L^2(\Omega)}+a(
  u_N^{n+1},v) - b(\lambda_N^{n+1},v) - f^n(v), &&\forall v\in V\nonumber\\
 s^n(\eta)&:= b(\eta,u_N^{n+1}) - g^n(\eta), &&\forall\eta\in M. \nonumber
\end{align}
We also introduce the primal and dual errors 
\begin{equation}\label{eq:errors}
e^n_u= u_N^n-u^n, \ \ \ e^n_\lambda=\lambda_N^n-\lambda^n. 
\end{equation}
Using the linearity of $r^n$, one finds that:
\begin{equation}\label{eq:propr}
r^n(v)=\left\langle\frac{e_u^{n+1}-e_u^n}{\Delta t},v\right\rangle_{L^2(\Omega)} +a( e_u^{n+1},v) - b(e^{n+1}_\lambda,v).
\end{equation}
As a consequence of inf-sup stability, one can bound the dual error by the primal error, as stated in the next lemma.
\begin{lemma}[Primal/Dual Error Relation]\label{Lemma:PDerror}
For $n=0,\cdots,L-1$, the dual error at time step $t^n$ can be bounded by the primal error as 
\begin{equation}\label{eq:estlambda}
\|e_\lambda^{n+1}\|_{W} \leq \frac1\beta\left(\frac{C_{\Omega}}{\Delta t}\|e_u^{n+1}-e_u^n\|_{L^2(\Omega)}+\gamma_a \| e_u^{n+1}\|_V +\|r^n\|_{V'}\right).
\end{equation}
\end{lemma}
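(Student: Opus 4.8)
The plan is to exploit the inf-sup stability of $b(\cdot,\cdot)$ to "invert" the duality pairing and express $\|e_\lambda^{n+1}\|_W$ in terms of the remaining quantities appearing in the residual identity \eqref{eq:propr}. First I would start from the definition of $\beta$ in \eqref{eq:infsup}, which gives, for every $\eta \in W$, the bound $\|\eta\|_W \le \frac{1}{\beta}\sup_{v\in V} \frac{b(\eta,v)}{\|v\|_V}$. Applying this with $\eta = e_\lambda^{n+1} = \lambda_N^{n+1}-\lambda^{n+1} \in W$, it remains to estimate $b(e_\lambda^{n+1},v)$ for an arbitrary $v\in V$ with $\|v\|_V$ normalized.

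The key step is to extract $b(e_\lambda^{n+1},v)$ from the residual identity \eqref{eq:propr}, rewriting it as
\begin{equation}\nonumber
b(e_\lambda^{n+1},v) = \left\langle\frac{e_u^{n+1}-e_u^n}{\Delta t},v\right\rangle_{L^2(\Omega)} + a(e_u^{n+1},v) - r^n(v).
\end{equation}
Then I would bound each of the three terms on the right individually: the first by Cauchy-Schwarz in $L^2(\Omega)$ followed by the norm bound \eqref{ineq:normequi}, giving $\frac{C_\Omega}{\Delta t}\|e_u^{n+1}-e_u^n\|_{L^2(\Omega)}\|v\|_V$; the second by continuity of $a(\cdot,\cdot;\mu)$ from \eqref{eq:infsup}, giving $\gamma_a\|e_u^{n+1}\|_V\|v\|_V$; and the third by the definition of the dual norm, giving $\|r^n\|_{V'}\|v\|_V$. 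Dividing by $\|v\|_V$, taking the supremum over $v\in V$, and multiplying by $\frac{1}{\beta}$ yields exactly \eqref{eq:estlambda}.

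I do not expect any serious obstacle here; the argument is a standard inf-sup "lifting" estimate. The only point requiring a little care is making sure the residual identity \eqref{eq:propr} is applied at the correct time index and that $e_\lambda^{n+1}$ genuinely lies in the full dual space $W$ (so that the global inf-sup constant $\beta$, rather than the reduced $\beta_N$, is the right one to use) — this is fine since both $\lambda_N^{n+1}\in M_N\subseteq M\subset W$ and $\lambda^{n+1}\in M\subset W$. One might also remark that the estimate does not require sign information on $e_\lambda^{n+1}$ or membership in the cone, only in the linear space $W$, which is why the simple inf-sup bound suffices.
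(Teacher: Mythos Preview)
Your proposal is correct and follows essentially the same route as the paper's own proof: both invoke the inf-sup condition on $b(\cdot,\cdot)$ (the paper phrases it via the existence of a supremizing element $v^\star$, you via the equivalent supremum formulation), then substitute the residual identity \eqref{eq:propr} and estimate the three resulting terms exactly as you describe. There is no meaningful difference in strategy or in the level of detail.
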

\begin{proof}
The inf-sup stability of $b(\cdot,\cdot)$ guarantees the existence of 
a $v^\star\in V,v^\star\not = 0$ such that
$$\beta \|v^\star\|_{V}\|e_\lambda^{n+1}\|_W \leq b(v^\star,e_\lambda^{n+1}).$$
Using~\eqref{eq:propr}, we find that:
\begin{eqnarray*}
 \beta \|v^\star\|_{V}\|e_\lambda^{n+1}\|_W &\leq&\left\langle\frac{e_u^{n+1}-e_u^n}{\Delta t},v^\star\right\rangle_{L^2(\Omega)} +a( e_u^{n+1},v^\star)- r^n(v^\star)\\
& \leq & \frac1{\Delta t}\|e_u^{n+1}-e_u^n\|_{L^2(\Omega)}\|v^\star\|_{L^2(\Omega)}
+\gamma_a \|e_u^{n+1}\|_V \|v^\star\|_V+\|r^n\|_{V'}\|v^\star\|_V
\end{eqnarray*}
and the result then follows from~\eqref{ineq:normequi}.
\end{proof}

\subsection{Projectors on the cone}
Let us then introduce, for $n=0,\cdots,L-1$, the Riesz-representer $\eta^n_s\in W$ of our inequality residual:
$$
\qquad \langle \eta , \eta_s^n \rangle_W = s^n(\eta),\quad  \eta \in W.
$$
Contrary to $\|r^n\|_V$, the quantity $\|s^n\|_W=\|\eta_s^n\|_W$ is not a straightforward error estimator 
component because of the inequality constraint. 
We obviously would correctly penalize 
if $s^n(\eta)>0$ for some $\eta\in M$ as desired, 
but we would also penalize $s^n(\eta)<0$ which is not necessary. 
Hence, we need to cope with the 
inherent nonlinearity induced by the inequalities. For this purpose,
we consider a family of projectors on the cone $\pi^n: W\rightarrow M$ which are assumed to satisfy, for $n=0,\cdots,L-1$, 
\begin{equation}\label{prop:pi}
\langle \pi^n(\eta^n_s),\lambda_N^{n+1}\rangle_W=0.
\end{equation} 

Having \eqref{prop:pi} and the characterization \eqref{eq:star} of the dual cone $M$ we find that
\begin{eqnarray}
\langle  e_\lambda^{n+1} , \pi^n(\eta^n_s)\rangle_W&=&\langle  \lambda_N^{n+1}-\lambda^{n+1} , \pi^n(\eta^n_s)\rangle_W= -\langle \lambda^{n+1} , \pi^n(\eta^n_s)\rangle_W\leq 0.\label{eq:cone}
\end{eqnarray}
\begin{remark}
Projectors satisfying~\eqref{prop:pi} improve
the one proposed in~\cite{HSW12}, namely one term in the error bound can be cancelled. We also refer to~\cite{weiss, WART2011} where such techniques are applied for finite element based error estimators in contact mechanics and for obstacle problems.  
\end{remark}
\subsection{A posteriori error estimators}

We are now in a position to define an a posteriori error estimator associated with our method.
\begin{theorem}\label{apostEst}
Define the a posteriori quantities
$$\delta^n_{s}=\|\eta^n_s-\pi^n(\eta^n_s)\|_W,\ \delta^n_{r}=\|r^n\|_{V'}.$$
One has:
\begin{eqnarray*}
\frac {1}{2}\|e_u^{L}\|^2_{L^2(\Omega)}  +\frac{\alpha_a}2\Delta t\sum_{n=0}^{L}\| e_u^{n}\|^2_V
&\leq&\sum_{n=0}^{L} \frac {1}{2}\left(\frac{C_\Omega\delta^n_s}{\beta}\right)^2
+\Delta t\frac{\delta^n_s\delta^n_r}\beta +\frac{\Delta t}{2\alpha_a}\left(\delta^n_r+\frac {\gamma_a\delta^n_s}{\beta}\right)^2
\\&&+\frac {1}{2}\|e_u^{0}\|^2_{L^2(\Omega)}.
\end{eqnarray*}
\end{theorem}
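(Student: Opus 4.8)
The plan is to run a standard parabolic energy argument on the error identity~\eqref{eq:propr}, the essential new ingredient being a one-sided bound on the mixed term $b(e_\lambda^{n+1},e_u^{n+1})$ that accounts for the inequality constraint via the cone projector. First I would test~\eqref{eq:propr} with $v=e_u^{n+1}\in V$, which yields
\[
\left\langle\frac{e_u^{n+1}-e_u^n}{\Delta t},e_u^{n+1}\right\rangle_{L^2(\Omega)}+a(e_u^{n+1},e_u^{n+1})=r^n(e_u^{n+1})+b(e_\lambda^{n+1},e_u^{n+1}).
\]
On the left I would use the elementary $L^2(\Omega)$-identity $\langle x-y,x\rangle=\tfrac12(\norm{x}^2-\norm{y}^2+\norm{x-y}^2)$, \emph{keeping} the term $\tfrac1{2\Delta t}\norm{e_u^{n+1}-e_u^n}_{L^2(\Omega)}^2$ (it is needed below), together with coercivity $a(e_u^{n+1},e_u^{n+1})\ge\alpha_a\norm{e_u^{n+1}}_V^2$; on the right, $r^n(e_u^{n+1})\le\delta_r^n\norm{e_u^{n+1}}_V$ is immediate.

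The heart of the proof is the estimate $b(e_\lambda^{n+1},e_u^{n+1})\le\norm{e_\lambda^{n+1}}_W\,\delta_s^n$. I would split $b(e_\lambda^{n+1},e_u^{n+1})=b(e_\lambda^{n+1},u_N^{n+1})-b(e_\lambda^{n+1},u^{n+1})$. For the first summand, the definition of the inequality residual and its Riesz representer give $b(e_\lambda^{n+1},u_N^{n+1})=s^n(e_\lambda^{n+1})+g^n(e_\lambda^{n+1})=\langle e_\lambda^{n+1},\eta_s^n\rangle_W+g^n(e_\lambda^{n+1})$, using linearity of $g^n$. For the second summand, since $\lambda_N^{n+1}\in M_N\subseteq M$ is an admissible test element in the detailed inequality~\eqref{eq:AOdisc2}, the choice $\eta=\lambda_N^{n+1}$ there gives $b(e_\lambda^{n+1},u^{n+1})\ge g^n(e_\lambda^{n+1})$. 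Subtracting, the $g^n$-terms cancel and $b(e_\lambda^{n+1},e_u^{n+1})\le\langle e_\lambda^{n+1},\eta_s^n\rangle_W$. Writing $\eta_s^n=(\eta_s^n-\pi^n(\eta_s^n))+\pi^n(\eta_s^n)$, the Cauchy--Schwarz inequality bounds the first part by $\norm{e_\lambda^{n+1}}_W\delta_s^n$ while~\eqref{eq:cone} shows the second part is nonpositive; this proves the claim. Lemma~\ref{Lemma:PDerror} then replaces $\norm{e_\lambda^{n+1}}_W$ by $\tfrac1\beta\bigl(\tfrac{C_\Omega}{\Delta t}\norm{e_u^{n+1}-e_u^n}_{L^2(\Omega)}+\gamma_a\norm{e_u^{n+1}}_V+\delta_r^n\bigr)$.

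Collecting terms, the right-hand side is bounded by $\bigl(\delta_r^n+\tfrac{\gamma_a\delta_s^n}\beta\bigr)\norm{e_u^{n+1}}_V+\tfrac{C_\Omega\delta_s^n}{\beta\Delta t}\norm{e_u^{n+1}-e_u^n}_{L^2(\Omega)}+\tfrac{\delta_s^n\delta_r^n}\beta$. Now I would apply Young's inequality twice: first, $\tfrac{C_\Omega\delta_s^n}{\beta\Delta t}\norm{e_u^{n+1}-e_u^n}_{L^2(\Omega)}\le\tfrac1{2\Delta t}\bigl(\tfrac{C_\Omega\delta_s^n}{\beta}\bigr)^2+\tfrac1{2\Delta t}\norm{e_u^{n+1}-e_u^n}_{L^2(\Omega)}^2$, whose second summand is absorbed by the term kept on the left; second, $\bigl(\delta_r^n+\tfrac{\gamma_a\delta_s^n}\beta\bigr)\norm{e_u^{n+1}}_V\le\tfrac1{2\alpha_a}\bigl(\delta_r^n+\tfrac{\gamma_a\delta_s^n}\beta\bigr)^2+\tfrac{\alpha_a}2\norm{e_u^{n+1}}_V^2$, absorbing half of the coercivity term. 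Multiplying the resulting per-step inequality by $\Delta t$, summing over $n$ and telescoping the $L^2(\Omega)$-terms to $\tfrac12\norm{e_u^{L}}_{L^2(\Omega)}^2-\tfrac12\norm{e_u^{0}}_{L^2(\Omega)}^2$, and moving $\tfrac12\norm{e_u^{0}}_{L^2(\Omega)}^2$ to the right, one arrives at the asserted estimate.

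The main obstacle is the mixed term $b(e_\lambda^{n+1},e_u^{n+1})$: treating the inequality correctly hinges on using $\lambda_N^{n+1}\in M$ as a test element in the detailed variational inequality and on invoking the projector property~\eqref{prop:pi} together with~\eqref{eq:cone}; without the projector one would only recover the coarser quantity $\norm{s^n}_W$ in place of $\delta_s^n$. The remaining manipulations are routine energy-estimate bookkeeping.
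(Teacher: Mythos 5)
Your proof is correct and follows essentially the same route as the paper: test the error identity~\eqref{eq:propr} with $e_u^{n+1}$, reduce the mixed term to $\langle e_\lambda^{n+1},\eta_s^n\rangle_W$ and then to $\delta_s^n\|e_\lambda^{n+1}\|_W$ via the projector property~\eqref{prop:pi} and~\eqref{eq:cone}, insert Lemma~\ref{Lemma:PDerror}, apply Young's inequality twice (absorbing $\tfrac1{2\Delta t}\|e_u^{n+1}-e_u^n\|^2_{L^2(\Omega)}$ and $\tfrac{\alpha_a}{2}\|e_u^{n+1}\|_V^2$), and telescope after multiplying by $\Delta t$. The only, harmless, deviation is the algebra for the mixed term: you use the two-term split $b(e_\lambda^{n+1},u_N^{n+1})-b(e_\lambda^{n+1},u^{n+1})$ together with the detailed inequality~\eqref{eq:AOdisc2} tested at $\eta=\lambda_N^{n+1}$, whereas the paper's four-term expansion additionally invokes the complementarity identities $s^n(\lambda_N^{n+1})=0$ and $b(\lambda^{n+1},u^{n+1})=g^n(\lambda^{n+1})$; both yield the same bound $b(e_\lambda^{n+1},e_u^{n+1})\leq\langle e_\lambda^{n+1},\eta_s^n\rangle_W$.
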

\begin{proof}
First, we note that we have for all $n=0,\ldots,L-1$:
\begin{equation}\label{eq:orth_N}
 s^{n}(\lambda_N^{n+1})=0,
\end{equation}
and we recall that for all $n=1,\ldots,L$:
\begin{equation}\label{eq:orth}
 b(\lambda^{n},u^{n}) = g^n(\lambda^{n}). 
\end{equation}
From~\eqref{eq:propr}, we then have:
\begin{equation}\label{eq:eq1}
\left\langle\frac{e_u^{n+1}-e_u^n}{\Delta t},e_u^{n+1}\right\rangle_{L^2(\Omega)} +a( e_u^{n+1},e_u^{n+1})=r^n(e_u^{n+1})+b(e^{n+1}_\lambda,e_u^{n+1}).
\end{equation}
Let us focus on the term $ b(e_\lambda^{n+1},e_u^{n+1} )$. Thanks to \eqref{eq:cone},~\eqref{eq:orth_N}, \eqref{eq:orth}, definition of $s^n$ and the fact that
$g^n(\lambda_N^{n+1})-b(\lambda^{n+1}_N,u^{n+1})\leq 0$ from~\eqref{eq:AOdisc2}, this term can be simplified as follows:
\begin{eqnarray}
b(e_\lambda^{n+1},e_u^{n+1} )&=&b(\lambda^{n+1}_N,u_N^{n+1})-b(\lambda^{n+1},u_N^{n+1})-b(\lambda_N^{n+1},u^{n+1})+b(\lambda^{n+1},u^{n+1})\nonumber\\
&\leq&g^n(\lambda_N^{n+1})-s^n(\lambda^{n+1})-g^n(\lambda^{n+1})-g^n(\lambda^{n+1}_N)+g^n(\lambda^{n+1})\nonumber\\
&= &-s^n(\lambda^{n+1})=s^n(e_\lambda^{n+1})=\langle e_\lambda^{n+1} , \eta^n_s\rangle_W\nonumber\\
&=&\langle e_\lambda^{n+1} , \pi^n(\eta^n_s)\rangle_W+\langle e_\lambda^{n+1}, \eta^n_s-\pi^n(\eta^n_s)\rangle_W\nonumber\\
&\leq&\| e_\lambda^{n+1}\|_W \|\eta^n_s- \pi^n(\eta^n_s)\|_W =\delta^n_{s}\| e_\lambda^{n+1}\|_W .\nonumber
\end{eqnarray}
This estimate combined with~\eqref{eq:eq1} and the coercivity of $a$, gives rise to: 
\begin{eqnarray}
\left\langle\frac{e_u^{n+1}-e_u^n}{\Delta t},e_u^{n+1}\right\rangle_{L^2(\Omega)} +\alpha_a\| e_u^{n+1}\|^2_V
\leq \delta^n_r\|e_u^{n+1}\|_V+\delta^n_s\| e_\lambda^{n+1}\|_W  .\label{eq:eq2}
\end{eqnarray}
The first term of the inequality~\eqref{eq:eq2} can be expressed as
\begin{eqnarray}
\left\langle\frac{e_u^{n+1}-e_u^n}{\Delta t},e_u^{n+1}\right\rangle_{L^2(\Omega)}&=&\frac{1}{2\Delta t}\| e_u^{n+1}\|_{L^2(\Omega)}^2-\frac{1}{2\Delta t}\| e_u^{n}\|_{L^2(\Omega)}^2\nonumber\\
&+&\frac{1}{2\Delta t}\| e_u^{n+1}-e_u^{n}\|_{L^2(\Omega)}^2  .\label{eq:eqt}
\end{eqnarray} 
Using Lemma~\ref{Lemma:PDerror} and Young's inequality, we then bound $\| e_\lambda^{n+1}\|_W$ in~\eqref{eq:eq2}:
\begin{eqnarray}
\delta^n_s\| e_\lambda^{n+1}\|_W
&\leq&\frac {1}{2\Delta t}\left(\frac{C_\Omega\delta^n_s}{\beta}\right)^2
+\frac{\delta^n_s\delta^n_r}\beta 
+\frac {1}{ 2\Delta t} \|e_u^{n+1}-e_u^n\|^2_{L^2(\Omega)} 
+\frac {\gamma_a\delta^n_s}{\beta}\|e_u^{n+1}\|_V.\nonumber
\end{eqnarray}
Combining this estimate with~\eqref{eq:eqt}, we can simplify~\eqref{eq:eq2} as follows:
\begin{eqnarray}
\frac {1}{2\Delta t}\|e_u^{n+1}\|^2_{L^2(\Omega)}  +\alpha_a\| e_u^{n+1}\|^2_V
&\leq&\frac {1}{2\Delta t} \|e_u^{n }\|^2_{L^2(\Omega)} + \frac {1}{2\Delta t}\left(\frac{C_\Omega\delta^n_s}{\beta}\right)^2
+\frac{\delta^n_s\delta^n_r}\beta \nonumber\\&+&\left(\delta^n_r+\frac {\gamma_a\delta^n_s}{\beta}\right)\|e_u^{n+1}\|_V.\label{eq:eq3}
\end{eqnarray}
Using Young's inequality gives: 
$$\left(\delta^n_r + \frac {\gamma_a\delta^n_s}{\beta}\right)\|e_u^{n+1}\|_V\leq \frac 1{2\alpha_a}\left(\delta^n_r + \frac {\gamma_a\delta^n_s}{\beta}\right)^2+\frac{\alpha_a}2\|e_u^{n+1}\|_V^2.$$
We get the result by combining the latter with~\eqref{eq:eq3}, summing the resulting inequalities from $n=0$ up to $L-1$ and multiplying the result by $\Delta t$.
\end{proof}
\begin{remark}
We point out that $\delta_s^n$ can be regarded as measure for the violation of the constraint. Thus it plays a similar role as typical penalty terms in primal methods.
\end{remark}

\section{Implementational aspects}\label{sec:implementation}
In this section, we mainly discuss the implementational aspects of solving a reduced basis problem (\ref{eq:VIRB1})--(\ref{eq:VIRB2}) associated with our detailed formulation~(\ref{eq:AOdisc1})--(\ref{eq:AOdisc2}). 
\subsection{Solution of the detailed problem}
Let us give a few remarks about the solvability of the detailed problem. The differential operator in~(\ref{eq:AOdisc1}) is of convection-diffusion type with constant (in the Black-Scholes model) or variable (in the Heston model) coefficients. Thus for the specific range of the parameters, the convective term may dominate the diffusive one, and applying the standard Galerkin method might lead to instabilities and inaccurate results. To overcome this difficulty and enhance the quality of the discrete solution, one 
can use a different discretization scheme, e.g., a streamline upwind Petrov-Galerkin method~\cite{bochev2004}. However, for our numerical simulations, we restrict the parameter range such that 
the 
diffusive term is large enough compared to the convective one and a standard Galerkin finite element method can be applied. 

For the remainder of the paper, $V$ is now a standard conforming piecewise 
linear finite element space used for the discretization 
of the variational inequality \eqref{eq:AOdisc1}--\eqref{eq:AOdisc2}. More precisely, 
consider a triangulation $\mathcal{T}_h$ of $\Omega$, consisting of $J$ simplices $K_h^j$, 
$1\leq j\leq J$, such that $\overline{\Omega}=\cup_{K_h\in\mathcal{T}_h}\overline{K}_h$. We then use a standard conforming nodal first order finite element space. 
 We define the discrete space 
$V_h:= \{v \in V| v_{|K_h^j}\in\mathbb{P}^1(K_h^j), 1\leq j\leq J \}$ of dimension $H_V:=H$.
For the sake of simplicity, we omit the index $h$ and consider $V$ to be a discrete finite element space. 
We associate the basis functions $\phi_i \in V$ with its Lagrange 
node $m_i\in\overline{\Omega}$, i.e., $\phi_i(m_j) = \delta_{ij}, i,j=1,\ldots,H$.

For the discretization of the Lagrange multipliers in $M\subset W$, we use 
dual basis functions~\cite{Woh00a}, that is, we consider a dual finite
element basis $\chi_j$ of $W:=V'$, so that
$b(\phi_i,\chi_j)=\delta_{ij}$, $i,j=1,\ldots,H_W=H$. The cone $M$ is
defined by:
$M=\left\{\sum_{i=1}^{H_W}\eta_i\chi_i,\ \eta_i\geq 0 \right\}.$

As these spaces are assumed to be sufficiently accurate a priori, that is, the 
finite element discretization error is neglegible compared to the 
reduction error, we do not discriminate notationally between the true and 
the finite element spaces. However a more extensive analysis has to take into account also the quality of the detailed solution.

\subsection{Solution algorithm for the reduced problem}\label{sec:SolAlg}
We shall now present a method to solve the problem~(\ref{eq:VIRB1})--(\ref{eq:VIRB2}). 
The approach we follow is mainly based on the Primal-Dual-Active-Set-Strategy~\cite{wohlmuth,kunisch}, which is equivalent to semi-smooth Newton method, thus a superlinear convergence of the algorithm can be achieved. 

We expand the solution $\left(u_N^n(\mu),\lambda_N^n(\mu)\right)$ of~(\ref{eq:VIRB1})--(\ref{eq:VIRB2}) as $u_N^n(\mu)=\sum_{j=1}^{N_V}\overline{u}_{N,j}^n\psi_j$ and $\lambda_N^n(\mu)=\sum_{j\prime=1}^{N_W}\overline{\lambda}_{N,j\prime}^n\xi_{j\prime}$ with the coefficient vectors $U_N^{n}=\left(\overline{u}_{N,j}\right)_{j=1}^{N_V}\in \R^{N_V}$, $\Lambda_N^{n}=\left(\overline{\lambda}_{N,j'}\right)_{j'=1}^{N_W} \in \R^{N_W}$. We also introduce the following set of notations: For $\nu\in[-1,1]$, denote
by $\mathcal{M}$, ${\mathcal{A}}^\nu(\mu)$ and
${\cal B}$ the matrices of coefficients $\left(\mathcal{M}\right)_{i,j}= \langle
\psi_i,\psi_j\rangle_V$, $\left({\mathcal{A}}^\nu(\mu)\right)_{i,j}=\langle
\psi_i,\psi_j\rangle +\nu \Delta t a(\psi_i,\psi_j ; \mu)$ and
$\left({\mathcal{B}}\right)_{i,j'}=b(\xi_{j'},\psi_i)$  with $1\leq i,j\leq
N_V$ and $1\leq j'\leq N_W$, respectively. Denote also by ${\cal F}^n(\mu)$ and
${\cal G}^n(\mu)$ the vectors of components $f^n(\psi_i;\mu)$ and
$g^n(\xi_{j'};\mu)$, respectively. 
Let $U$ and $\Lambda$ be generic
coefficient vectors of length $N_V$ and $N_W$, respectively, then we introduce the function
$\varphi(U,\Lambda)= \Lambda-\max (0,\Lambda-c({\cal B}^TU - {\cal
  G}^n(\mu)))$, where $c>0$ and $\max(\cdot)$ applies componentwise. With the use of these notations, the problem~(\ref{eq:VIRB1})--(\ref{eq:VIRB2}) can be rewritten in the algebraic form
\begin{align}
{\cal A}^\theta(\mu) U_N^{n+1}-{\cal B}\Lambda_N^{n+1}&={\cal
  A}^{\theta-1}(\mu) U_N^{n}+{\cal F}^n(\mu),& \label{eq:VIRB7}\\ 
  \varphi(U_N^{n+1},\Lambda_N^{n+1})&=0,&\label{eq:VIRB8}
\end{align}
and the initial data is obtained by solving ${\cal M} U^0_N=\left( \langle
u^0,\psi_j\rangle_V\right)_{j=1}^{N_V}$. To employ the the Primal-Dual-Active-Set-Strategy, we introduce the active and inactive sets
\begin{align*}
{A}(U,\Lambda)&=\left\{p:\ 1\leq p \leq N_W,\ \left(\Lambda-c({\cal B}^TU - {\cal
  G}^n(\mu)\right)_p\geq 0 \right\},\\
{I}(U,\Lambda)&=\left\{p:\ 1\leq p \leq N_W,\ \left(\Lambda-c({\cal B}^TU - {\cal
  G}^n(\mu)\right)_p < 0  \right\},
\end{align*}
where we have denoted by $(\cdot)_p$ the $p$-th component of a vector. 
A Newton iteration can then be applied, which gives rise to the next algorithm.
\begin{algorithm}[Time solver for the reduced system]
Given a tolerance $\varepsilon>0$ and initial conditions $(U_N^{0},\Lambda_N^0)$, the trajectory
$(U_N^{n},\Lambda_N^n)$, $n=0,\cdots,L$ is computed recursively as
follows. Suppose that at time step $t_n=n\Delta t$, $(U_N^{n},\Lambda_N^{n})$ is known.
\begin{enumerate}
\item Set $(U_N^{n+1,0},\Lambda_N^{n+1,0}):=(U_N^{n},\Lambda_N^{n})$ and $Tol=+\infty$. 
\item While $Tol>\varepsilon$, do
\begin{enumerate}
\item Define $(U_N^{n+1,k+1},\Lambda_N^{n+1,k+1})$ as the solution of:
\begin{align*}
{\cal A}^\theta(\mu)U_N^{n+1,k+1}-{\cal B}\Lambda_N^{n+1,k+1}&={\cal
  A}^{\theta-1}(\mu) U_N^{n}+{\cal F}^n(\mu),  \\ 
\left( {\cal B}^T U_N^{n+1,k+1}\right)_p&=\left({\cal
  G}^n(\mu)\right)_p, \ \ \ \ \forall p\in  {A}(U_N^{n+1,k},\Lambda_N^{n+1,k}),\\
\left( \Lambda_N^{n+1,k+1}\right)_p&=0,  \ \ \ \ \ \ \ \  \ \ \ \ \ \ \forall p\in  {I}(U_N^{n+1,k},\Lambda_N^{n+1,k}).
\end{align*}
\item Set $Tol=\|U_N^{n+1,k+1}-U_N^{n+1,k}\|_V+\|\Lambda_N^{n+1,k+1}-\Lambda_N^{n+1,k} \|_W$.
\end{enumerate}
\item Define $(U_N^{n+1},\Lambda_N^{n+1})=(U_N^{n+1,k+1},\Lambda_N^{n+1,k+1})$.
\end{enumerate}
\end{algorithm}
Note that $U_N^{n+1,k}$ and $\Lambda_N^{n+1,k}$ do not appear in the
iteration (except in the definitions of the sets  $
{A}(U_N^{n+1,k},\Lambda_N^{n+1,k})$ and $
{I}(U_N^{n+1,k},\Lambda_N^{n+1,k})$) as the functions involved
in~(\ref{eq:VIRB7})--(\ref{eq:VIRB8}) are either linear or piecewise linear. Using the previous time-step solution as a start iterate the active set is already a very good guess and thus we can expect a good convergence of the Newton iteration.
\subsection{Projectors}\label{sec:RBproj} 
For the a posteriori error estimators, we require suitable projectors on the cone.
In the case of our finite element basis, we simply choose them by their discrete vectorial 
representation in the dual finite element space as
$$ \pi^n(\eta):=I_{|\{\Lambda_N^{n+1}=0\}} \circ \left([{\cal B}^T U - {\cal G}^n(\mu)]_+\right),$$
where for $p=1,\ldots,N_w$, $I_{|\{\Lambda_N^{n+1}=0\}}$ is defined by 
$\left(I_{|\{\Lambda_N^{n+1}=0\}}\right)_p=0$ if $\left(\Lambda_N^{n+1}\right)_p\neq 0$, and  
$\left(I_{|\{\Lambda_N^{n+1}=0\}}\right)_p=1$ else. Given a coefficient vector $X$, we denote by $[X]_+$ its positive part, that is $\left([X]_+\right)_p=0$, if  $\left(X\right)_p\leq 0$ and $\left([X]_+\right)_p=\left(X\right)_p$ else.
With this definition, the property (\ref{prop:pi}) is trivially fulfilled. 
\subsection{Computation of primal and dual reduced basis}\label{sec:RBcomp} 
In this section, we present a method to build the reduced primal basis $\Psi_N\subset V$ and 
dual basis $\Xi_N \subset M$. 
The approach we follow consists in building iteratively and simultaneously 
the reduced primal and  dual basis in a greedy fashion, as presented in Algorithm 2. 
We consider a finite training set  ${\cal P}_{train}\subset {\cal P}$  small enough such that it can be 
scanned quickly, but sufficiently large to represent the parameter space well.
After a (quite arbitrary) choice of the initial reduced spaces, we proceed in a greedy loop:
For a given primal and dual space at stage $k$, we identify the parameter 
vector $\mu_{k+1}$ in the training set 
that currently leads to the worst reduced basis approximation. 
For this parameter, new primal and dual basis vectors are generated, and this loop 
is repeated $N_{\max}\in \N$ times.
To be more precise, the 
selection of new primal vectors is based on the idea of the POD-Greedy procedure, which meanwhile is 
standard for RB-methods for time-dependent parametric problems \cite{haasdonk08,HSW12b} and has provable 
convergence rates \cite{Ha13}. The main step consists in computing the complete simulation of 
the worst-resolved trajectory, extracting the new information by an orthogonal projection on the current 
primal space and a compression by a Proper Orthogonal Decomposition (POD) to a single vector, 
the so called dominant POD-mode.
In the algorithm $\Pi_{ V^k_N}$ denotes the orthogonal
projection on $ V^k_N$ with respect to $\langle \cdot,\cdot\rangle_V$ and the dominant POD-mode is given as
\begin{align*}
POD_1\left( \{v^n\}_{n=0}^L\right) :=\arg\min_{\|z\|_V=1}\sum_{n=0}^L\| v^n - \left\langle v^n, z\right\rangle_V z \|_V^2.
\end{align*}
The new dual basis vectors are selected using an Angle-Greedy argument, which aims at maximizing the 
volume of the resulting cone \cite{HSW12b}. This means, we include the snapshot showing the largest  
deviation from the current reduced dual space,
that is, the vector $\lambda^{n_{k+1}}(\mu_{k+1})$ that 
maximizes $\measuredangle \left( \lambda^{n_{k+1}}(\mu_{k+1}) , W_N^k\right)$ where $\measuredangle (\eta,Y)
:= \arccos  ||\Pi_{Y}\eta||_W / ||\eta||_W$ denotes the angle
between a vector $\eta\in W$ and a linear space $Y\subset W$. 

We point out that our system \eqref{eq:VIRB1}--\eqref{eq:VIRB2} has a saddle point structure. Thus 
ignoring the dual basis in the construction of the primal basis may lead 
to a reduced system for which the stability cannot be guaranteed.
To guarantee the inf-sup stability of our approach, we follow the ``inclusion of supremizers'' 
idea introduced in~\cite{rozza} for the Stokes problem.
This enrichment consists in including $B\xi_{k+1}$ into the primal space, 
where $B\xi_{k+1} \in V$ is the solution of $b(\xi_{k+1},v) = \langle B\xi_{k+1},v\rangle_V$, for $v\in V$. 
Then $v=B\xi_{k+1}$ is the element that supremizes the expression
$\langle B\xi_{k+1},v\rangle_V$, hence it is called a ``supremizer''.
This extension ensures that the reduced inf-sup condition (\ref{eqn:reduced-inf-sup})
is satisfied, which can be proven following the lines of 
\cite{HSW12}, and hence the reduced problem is well-posed. 
We conclude by defining the final reduced space $V_N := {\rm span} \Psi_N$ of 
dimension $N_V :={\rm dim} V_N$.

The selection of the ``worst'' parameter in the greedy loop requires a measure $E(\mu)$, which is an 
error estimator that can be chosen as one of these three quantities:
\begin{align}
E_{L^2}^{true}(\mu)&={\Delta t\sum_{n=0}^{L}\|e_u^{n}\|^2_{V}}  \label{Est:L2},\\
E_{energy}^{true}(\mu)&={\frac {1}{2}\|e_u^{L}\|^2_{L^2(\Omega)}  +\frac{\alpha_a(\mu)}2\Delta t\sum_{n=0}^{L}\| e_u^{n}\|^2_V}\label{Est:true},\\
E_{energy}^{Apost}(\mu)&={\sum_{n=0}^{L} \frac {1}{2}\left(\frac{C_\Omega\delta^n_s}{\beta}\right)^2
+\Delta t\frac{\delta^n_s\delta^n_r}\beta +\frac{\Delta t}{2\alpha_a(\mu)}\left(\delta^n_r+\frac {\gamma_a(\mu)\delta^n_s}{\beta}\right)^2
+\frac {1}{2}\|e_u^{0}\|^2_{L^2(\Omega)}}.\nonumber\\\label{Est:Apost}
\end{align}
Hence $E_{L^2}^{true}$ is the squared true $L^2$-error, $E_{energy}^{true}$ is the true 
error measured in a space-time energy, and $E_{energy}^{Apost}$ is the a posteriori error bound 
from the previous section, cf.\ Theorem~\ref{apostEst}. 
With these notations, the resulting POD-Angle-Greedy algorithm 
is fully specified.
\begin{algorithm}[POD-Angle-Greedy algorithm]\label{alg:global-greedy}
Given $N_{\max}>0$, ${\cal P}_{train}\subset {\cal P}$
\begin{enumerate}
\item choose arbitrarily $\mu_1\in {\cal P}_{train}$ and $n_1 \in 1,\ldots,L$
\item set $\xi_1:= \lambda^{n_1}(\mu_1)/\norm{\lambda^ {n_1}(\mu_1)}_W$, $\Xi_N^1=\left\{ \xi_1 \right\}$, $W_N^1:={\rm span}(\Xi_N^1)$,
\item set $\Psi_N^1:={\rm orthonormalize}\left\{u^{n_1}(\mu_1),B \xi_1 \right\}$,
  $ V^1_N:={\rm
  span}(\Psi_N^1)$,
\item for $k=1,\ldots, N_{\max}-1$, do
\begin{enumerate}
\item define $ \mu_{k+1}:={\rm argmax}_{\mu\in{\cal P}_{train}}{\left( E(\mu) \right)}$,
\item find $n_{k+1}:={\rm argmax}_{n=1,\ldots,L} \left( \measuredangle \left( \lambda^n(\mu_{k+1}) , W_N^k\right)\right),$
\item set $\xi_{k+1}:= \lambda^{n_{k+1}}(\mu_{{k+1}}) / \norm{\lambda^{n_{k+1}}(\mu_{{k+1}})}_W$, \\ 
        $\Xi^{k+1}_N:=\Xi^k_N\cup \{\xi_{k+1}\}$, $W_N^{k+1}:={\rm span}(\Xi_N^{k+1})$,
\item define $\tilde \psi_{k+1}:=POD_1\left(\left\{u^n(\mu_{k+1})-\Pi_{ V^k_N}(u^n(\mu_{k+1}))\right\}_{n=0,\ldots,L}\right)$, \\
set $\Psi_N^{k+1}:={\rm orthonormalize}\left( \Psi_N^ {k}  
 \cup \left\{ \tilde \psi_{k+1}, B \xi_{k+1} \right\} \right)$,
  $ V^{k+1}_N:={\rm span}(\Psi_N^{k+1})$,
\end{enumerate}
\item define $\Xi_N:=\Xi_N^{N_{\max}}$, $W_N:={\rm span}(\Xi_N)$, $N_W:={\rm dim}(W_N)$,
\item define $\Psi_N:=\Psi_N^{N_{\max}}$, 
$ V_N:={\rm span} (\Psi_N)$, $N_V:={\rm dim} (V_N)$.
\end{enumerate}
\end{algorithm}

Let us give some additional comments on this procedure.
First note, that the extension steps in the algorithm are only performed if the sets remain 
linearly independent. Another way of stating this is to say that if a supremizer is already contained in the current space, it 
will not be inserted again in order to maintain the linear independence of the reduced bases. 
This results in $|\Psi_N^{k}|\leq 2k$ and $|\Xi_N^k| \leq k$.

Note also that the orthonormalization steps can either be done by a simple Gram-Schmidt 
orthonormalization, or a singular value decomposition (SVD). In particular the former is very attractive, as 
$V_N^k$ and $\tilde \psi_{k+1} $ are already orthonormal, hence in Step 4.(d) it is sufficient to 
orthonormalize the single vector $B\xi_{k+1}$.

Further, we emphasize that the supremizer enrichment in practice also may 
be skipped: This results in 
smaller and hence faster reduced models, by accepting the loss of a theoretical stability guarantee, see, e.g.,~\cite{HSW12}. 

Using the error estimator $E_{energy}^{true}(\mu)$ or $E_{L^2}^{true}(\mu)$ corresponds to work 
with quantities associated with the true error and is consequently more 
expensive, as the full trajectories for all parameters in the training set must be precomputed. This is in contrast to the case of working 
with $E_{energy}^{Apost}(\mu)$, which only involves the reduced solver 
used in the online phase, and only detailed trajectories for the 
selected parameters must be computed. Hence, when using $E_{energy}^{Apost}(\mu)$, the training set ${\cal P}_{train}$ can be potentially 
chosen much larger and hence much more representative for the 
complete parameter domain.

We finally give a short comment on the European option case: In that case, we use the $E_{L^2}^{true}(\mu)$ measure for 
selecting the worst parameter. Due to the absence of the Lagrange multiplier, we do not obtain a saddle point structure of the problem. Hence, we omit the supremizer enrichments and the Angle-Greedy step, and the algorithm then reduces to the classical (strong) POD-Greedy algorithm. 


\section{Numerical results}\label{sec:results}

\subsection{Numerical setting}
We start with a description of the numerical
values and methods we consider for the offline solver for Black-Scholes and Heston models. 

We use the $\theta$-scheme presented in
Section~\ref{sec:var_form} for the
time-discretization with $\theta=\frac{1}{2}$ for European and $\theta=1$ for American options. 
The time domain $[0,T]=[0,1]$ is discretized with a uniform mesh of step size $\Delta t:= T/L$, $L=20$.
The space domain is set to $\Omega=(S_{\min},S_{\max})=(0,300)\subset\mathbb{R}^1$ and $\Omega=(v_{\min},v_{\max})\times(x_{\min},x_{\max})=(0.0025,0.5)\times(-5,5)\subset\mathbb{R}^2$ for the Black-Scholes and Heston model, respectively. The Black-Scholes model is treated in the original $S$ variable, while the Heston model is considered with respect to the log-transformation $x=\log\left(\frac{S}{K}\right)$.
As a consequence, we use the $H^1$ norm for the Heston model and the weighted $H^1$ norm
$u\mapsto \left(\int_\Omega u^2 + S^2 (\partial_s u)^2  dS\right)^{1/2}$
for the Black-Scholes model.

 For the Black-Scholes model we set $H=200$ nodes and the Heston model $H=49\times 97=4753$ nodes.  
To build the basis, we consider a subset ${\cal P}_{train}$ of $\cal P$, which is specified for the Black-Scholes model as in \eqref{eq:par_domBS} and for the Heston model as in \eqref{eq:par_domH} for European options and as in~\eqref{eq:par_domHA} for American options.
\begin{align}
{\cal  P}&\equiv[0.0475,0.0525]\times[0.0014,0.0016]\times[0.4750,0.5250]
\label{eq:par_domBS}&\subset\mathbb{R}^3,\\
\mathcal{P}&\equiv[0.1,0.4]\times[0.21,0.9]\times[0.08,0.15]\times[1.2, 3]\times[0.01,0.2]&\subset\mathbb{R}^5,\label{eq:par_domH}\\
\mathcal{P}&\equiv[0.6,0.9]\times[0.21,0.9]\times[0.16,0.25]\times[3, 5]\times[0.01,0.2]&\subset\mathbb{R}^5\label{eq:par_domHA}.
\end{align}	
For the Black-Scholes setting we use $\mathcal{P}_{train}$ composed of $4^3$ values chosen equidistantly distributed in $\mathcal{P}$, while for the Heston model the set $\mathcal{P}_{train}$ is varying and specified in every particular case. We define our model parameters $\mu\in\mathcal{P}$ as $\mu\equiv(\rho,q,\sigma)$ and $\mu\equiv(\xi,\rho,\gamma,\kappa,r)$ for the Black-Scholes and Heston models, respectively. The strike $K$ is not considered as a model parameter, since it scales the value of the option. We set $K=100$ and $K=1$ for the Black-Scholes and Heston model, respectively.
In order to design the reduced primal and dual bases, we use the POD-Angle-Greedy algorithm with different error measures $E(\mu)$.

\subsection{European options}
We first consider the numerical results and a performance of the reduced basis method for the simpler linear case of the European call option with the Heston model. 
To illustrate the motivation to apply the reduced basis approach, we demonstrate the variability of the solution in parameter and time. In Figure~\ref{Fig:snapshots_heston} (top left) the detailed solution for a fixed parameter value at the final time $t=T$ is presented. The evolution of the solution for different time $t$ is shown in Figure~\ref{Fig:snapshots_heston} (top right) and the variation with respect to  parameters in Figure~\ref{Fig:snapshots_heston} (bottom). 
\begin{figure}
\includegraphics[width=.5\linewidth]{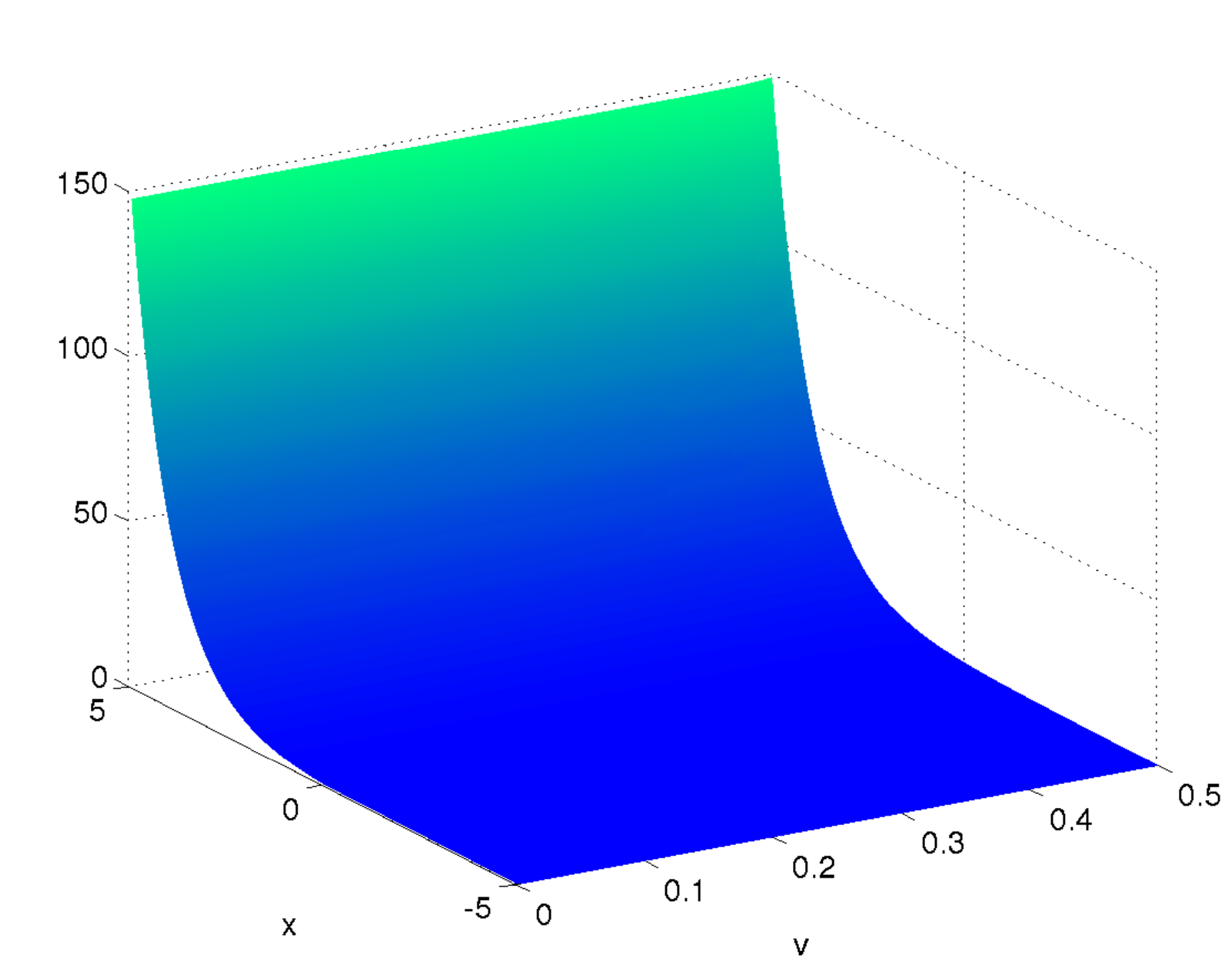}
 \includegraphics[width=.55\linewidth]{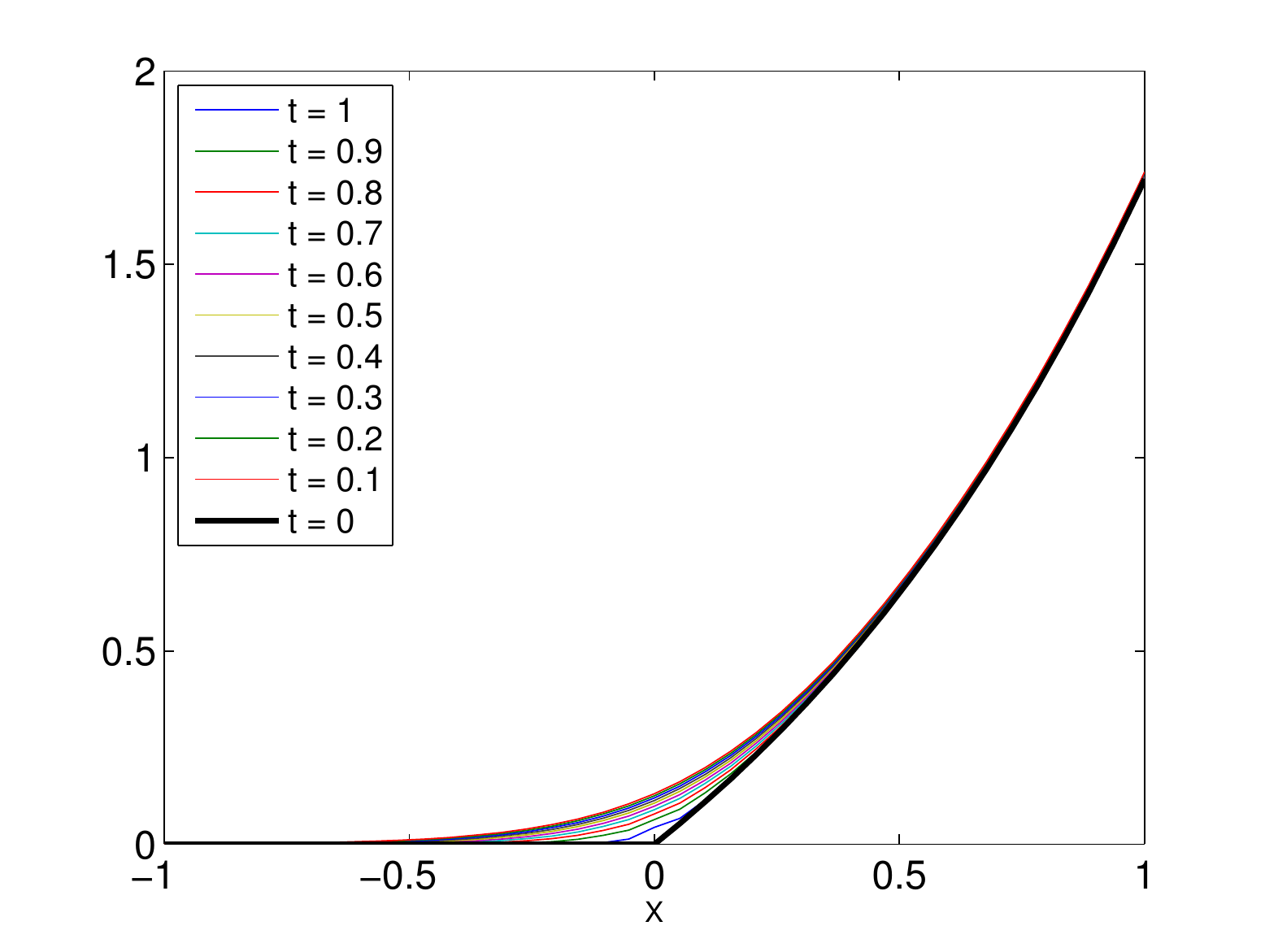}\\
 \includegraphics[width=.5\linewidth]{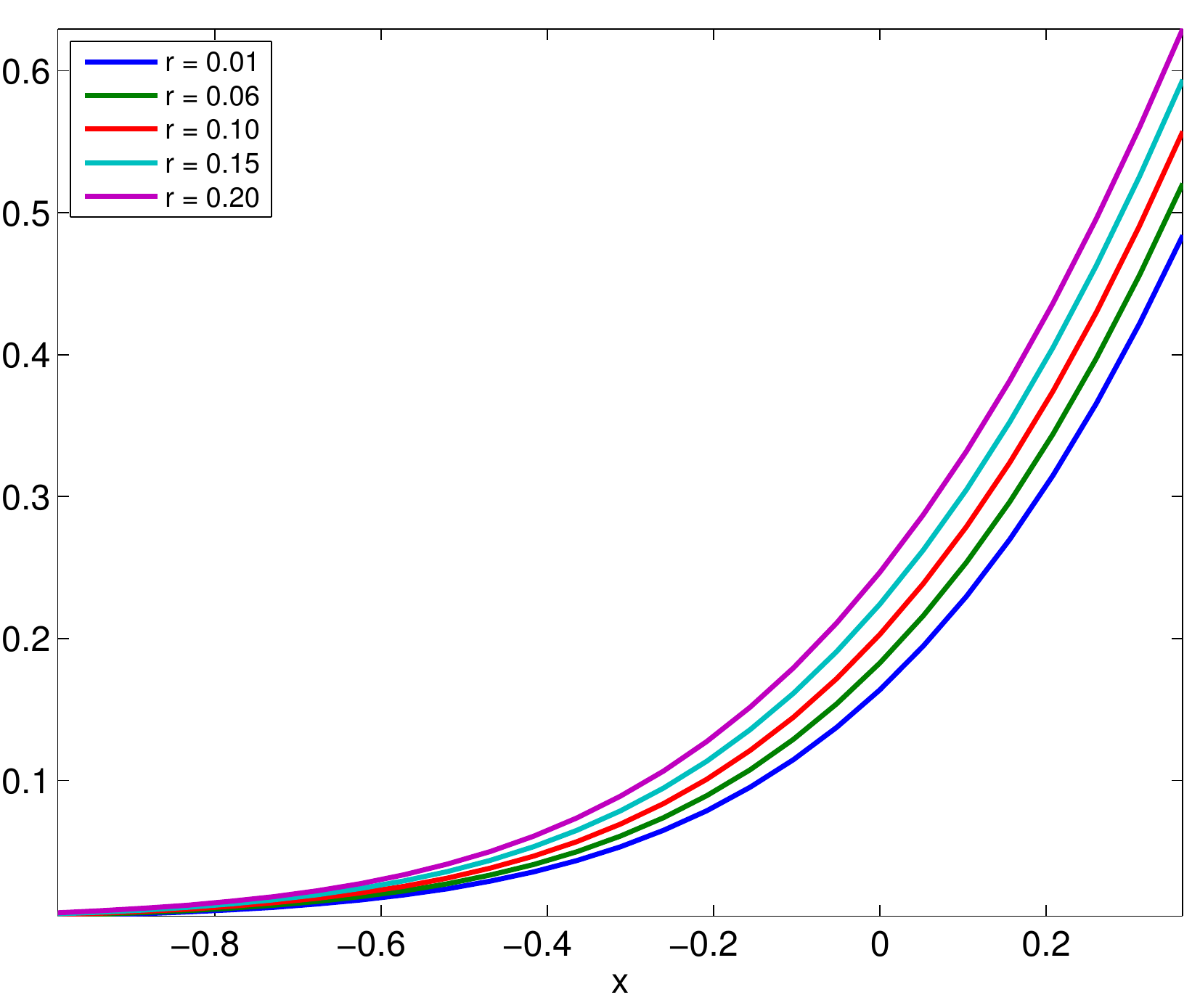}
\includegraphics[width=.5\linewidth]{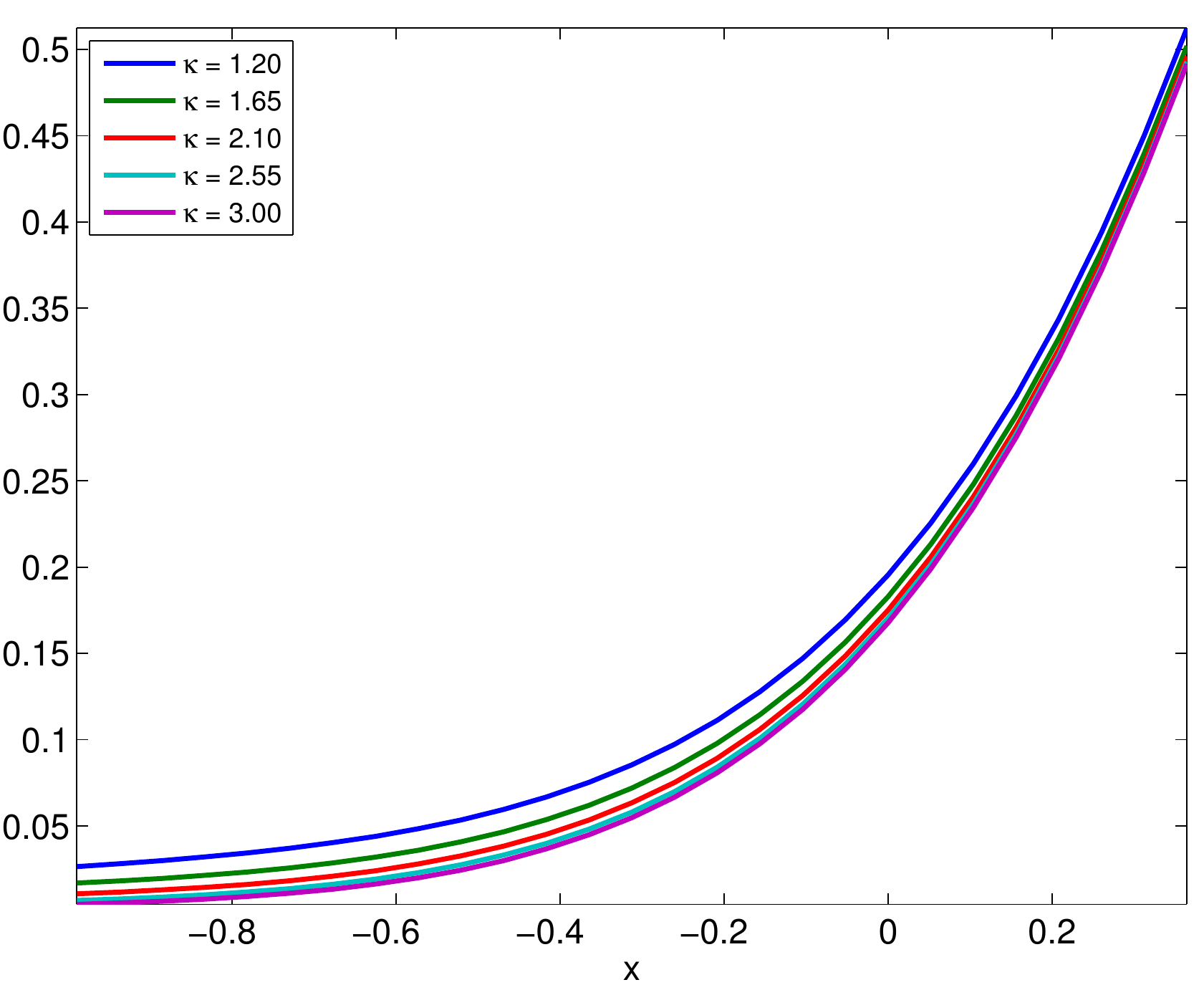} 
\caption{Top: The value of the European call option with the Heston model for $\mu=(0.4, 0.55, 0.06, 2.5, 0.0198)^T$ at $t=T=1$ (left) and a time evolution of the option at $v=0.1683$ (right). Bottom: Snapshots of the solution for different parameter values  extracted at a fixed volatility $v=0.1683$.}
\label{Fig:snapshots_heston}
\end{figure}
After computing the set of snapshots which consists of the detailed solutions for different $\mu_i\in\mathcal{P}_{train}$, we employ the POD-Angle-Greedy algorithm with $E_{L^2}^{true}(\mu)$ as a selection criterion to construct the reduced basis space $V_N$. As mentioned earlier, this procedure corresponds to the standard strong POD-Greedy algorithm.   

We test the reduced basis approach for different dimension of the parameter domain $\mathcal{P}\subset\mathbb{R}^d$, $d=2,3,5$, namely we consider $\mu=(\gamma,\kappa)\in\mathbb{R}^2$, $\mu=(\gamma,\kappa,r)\in\mathbb{R}^3$ and $\mu=(\xi,\rho,\gamma,\kappa,r)\in\mathbb{R}^5$. For each choice of $\mu$, the remaining parameter values are assumed to be fixed and taking the value from the default parameter vector $\mu^*=(0.3, 0.21, 0.095, 2, 0.0198 )^T$. 
In our first test, we consider $\mu=(\kappa, \gamma)$ and $|\mathcal{P}_{train}|=15^2=225$ equidistantly distributed points. The first six orthonormal reduced basis vectors produced by the algorithm are presented in Figure~\ref{Fig:RB_2D_basis}. 
\begin{figure}
\includegraphics[width=0.75\linewidth,height=7cm]{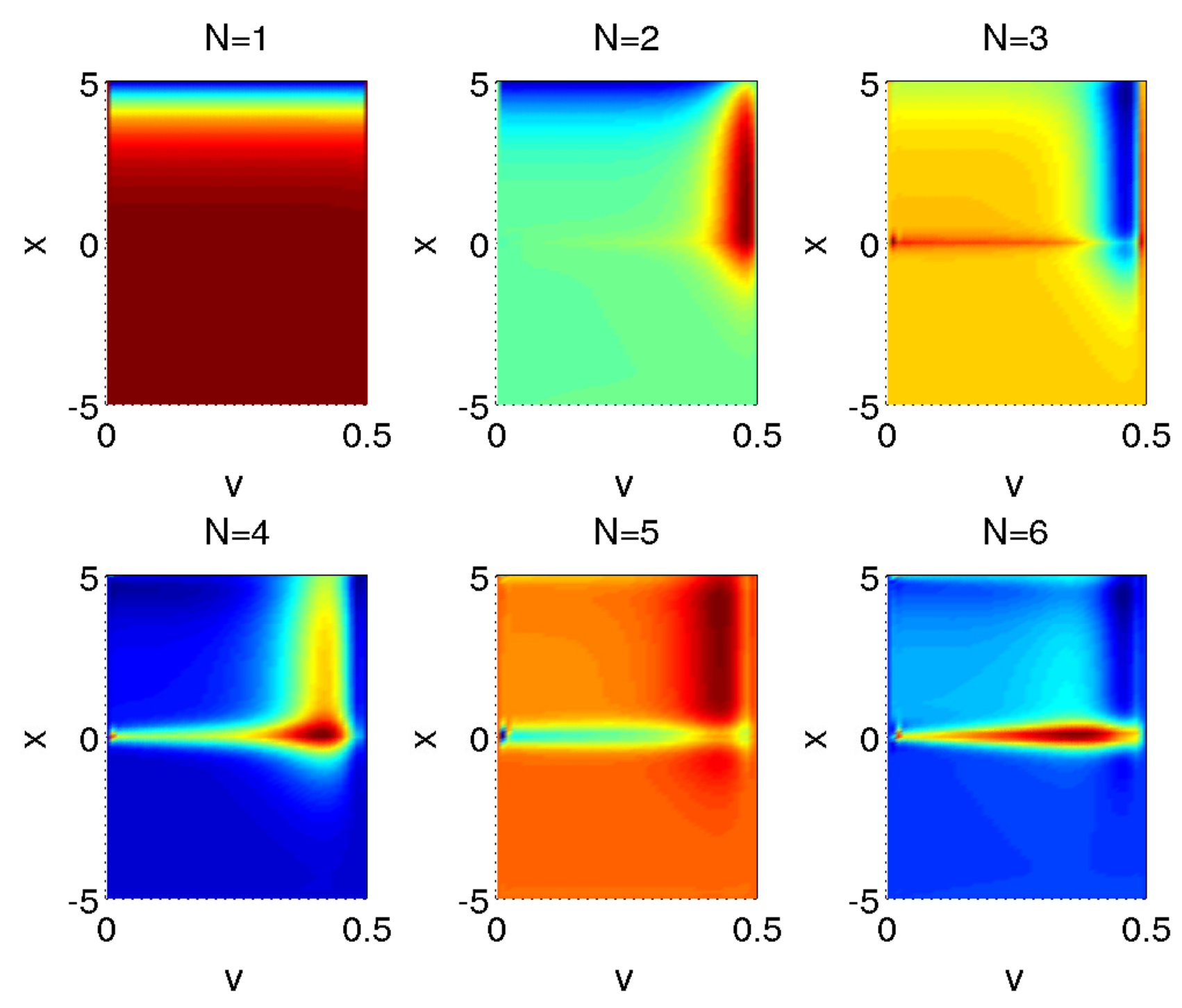} \centering
\caption{First six vectors of the reduced basis $\left\{\psi_k\right\}_{k=1}^{N_V}\subset\Psi_N$, obtained by Algorithm~\ref{alg:global-greedy} for the European option with the Heston model using $E(\mu)=E^{true}_{L^2}(\mu)$ for the case of $\mu=(\gamma,\kappa)$. }
\label{Fig:RB_2D_basis}
\end{figure}

To quantify the efficiency of the reduced basis method, we investigate the error decay when increasing the dimension $N_V$. For each reduced model, we compute the maximal error $\max_{\mu\in\mathcal{P}_{test}}\left\{E^{true}_{L^2}(\mu)\right\}$  over a random test set $|\mathcal{P}_{test}|=400$. In Figure~\ref{Fig:RB_2D_error}, we observe that the error plotted versus the dimension of the reduced model decays exponentially.
\begin{figure}
 \includegraphics[width=0.5\linewidth]{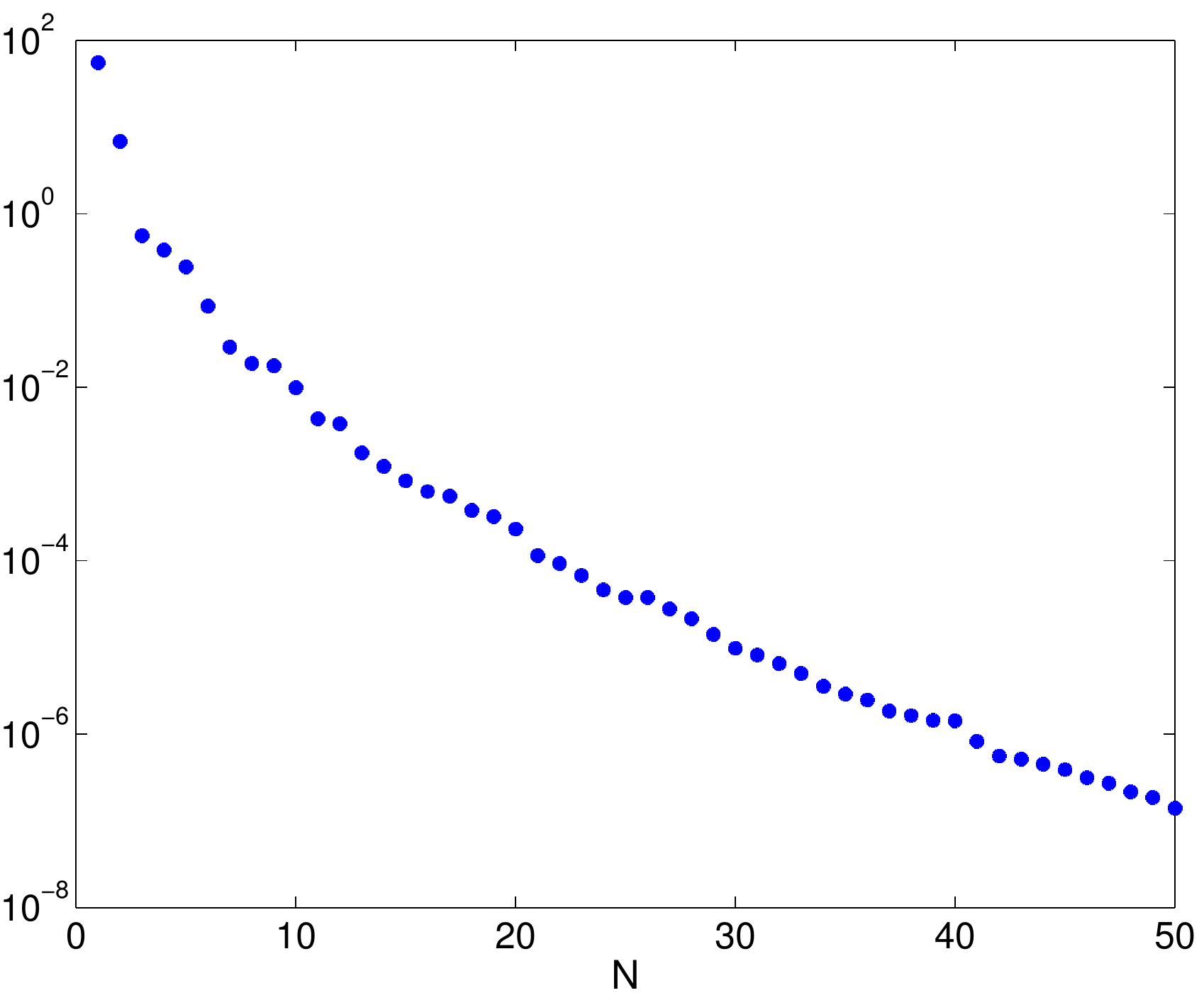}
  \includegraphics[width=0.5\linewidth]{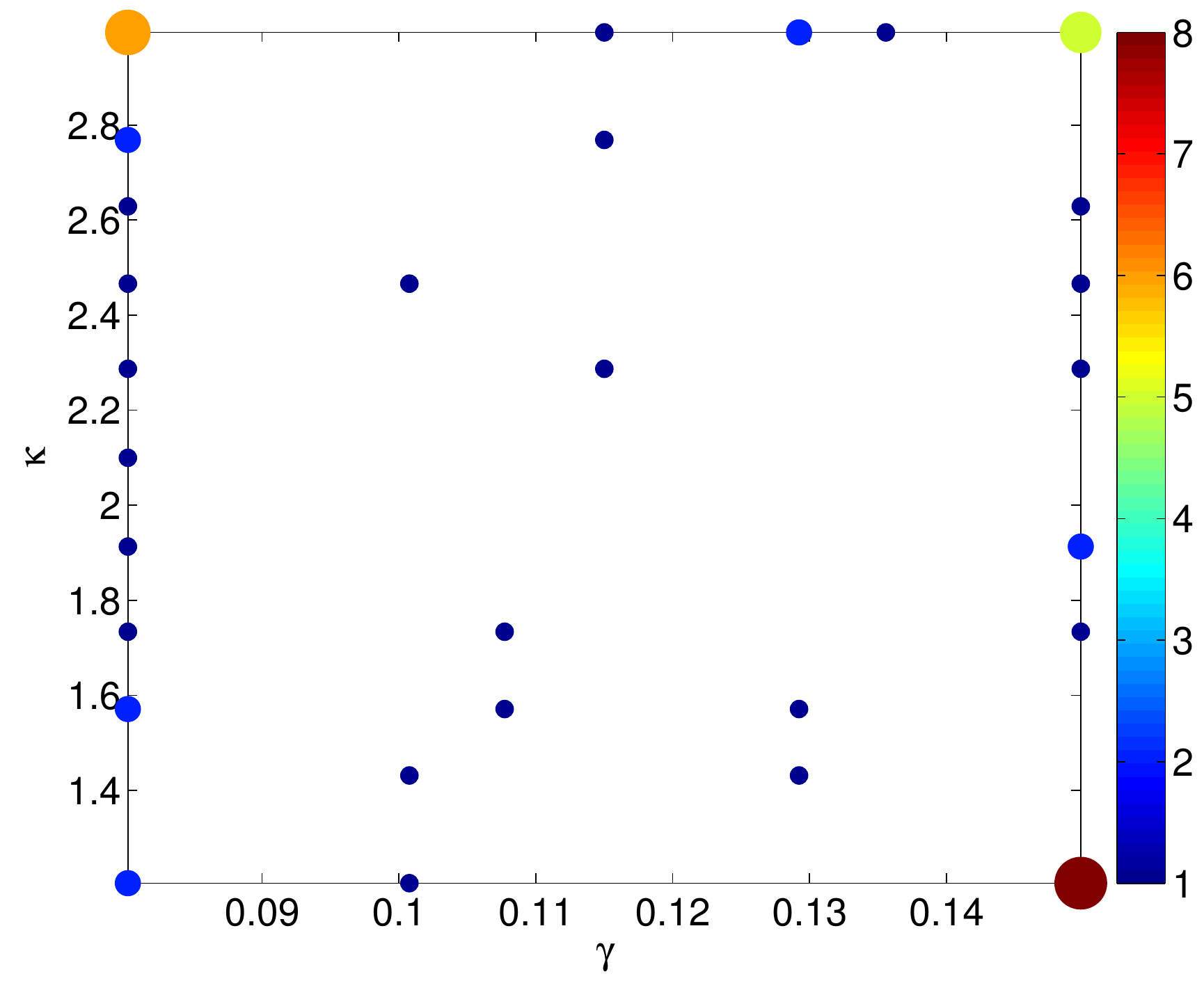}
 \caption{Left: Evolution of $\max_{\mu\in\mathcal{P}_{test}}\left\{E^{true}_{L^2}(\mu)\right\}$  for the European option with the Heston model and $\mu=(\gamma,\kappa)$. The test grid is random $|\mathcal{P}_{test}|=400$. Right: Plot of the selected parameters $\mu_1,...,\mu_N\in\mathcal{P}_{train}$ and their frequency of the selection in the construction of the reduced basis in Algorithm~\ref{alg:global-greedy}. The train set composed of $|\mathcal{P}_{train}|=15^2=225$ equidistantly distributed points.}
\label{Fig:RB_2D_error}
\end{figure}
The analogous results for other choices of $\mu=(\gamma,\kappa, r)$ and $\mu=(\xi,\rho,\gamma,\kappa,r)$ are presented in Figure~\ref{Fig:RB_3D_error} and Figure~\ref{Fig:RB_5D_error}. For both cases the exponentially decaying behavior of the error is shown, however it can be observed that the convergence is slower for larger dimension of the parameter, which is explained by the increasingly complex parameter dependence of the model. In Figure~\ref{Fig:RB_5D_error}, we also present the evolution of the train error $\max_{\mu\in\mathcal{P}_{train}}\left\{E^{true}_{L^2}(\mu)\right\}$ used for the reduced basis construction in the step $4.(a)$ of Algorithm~\ref{alg:global-greedy}. 

\begin{figure}
  \includegraphics[width=0.5\linewidth]{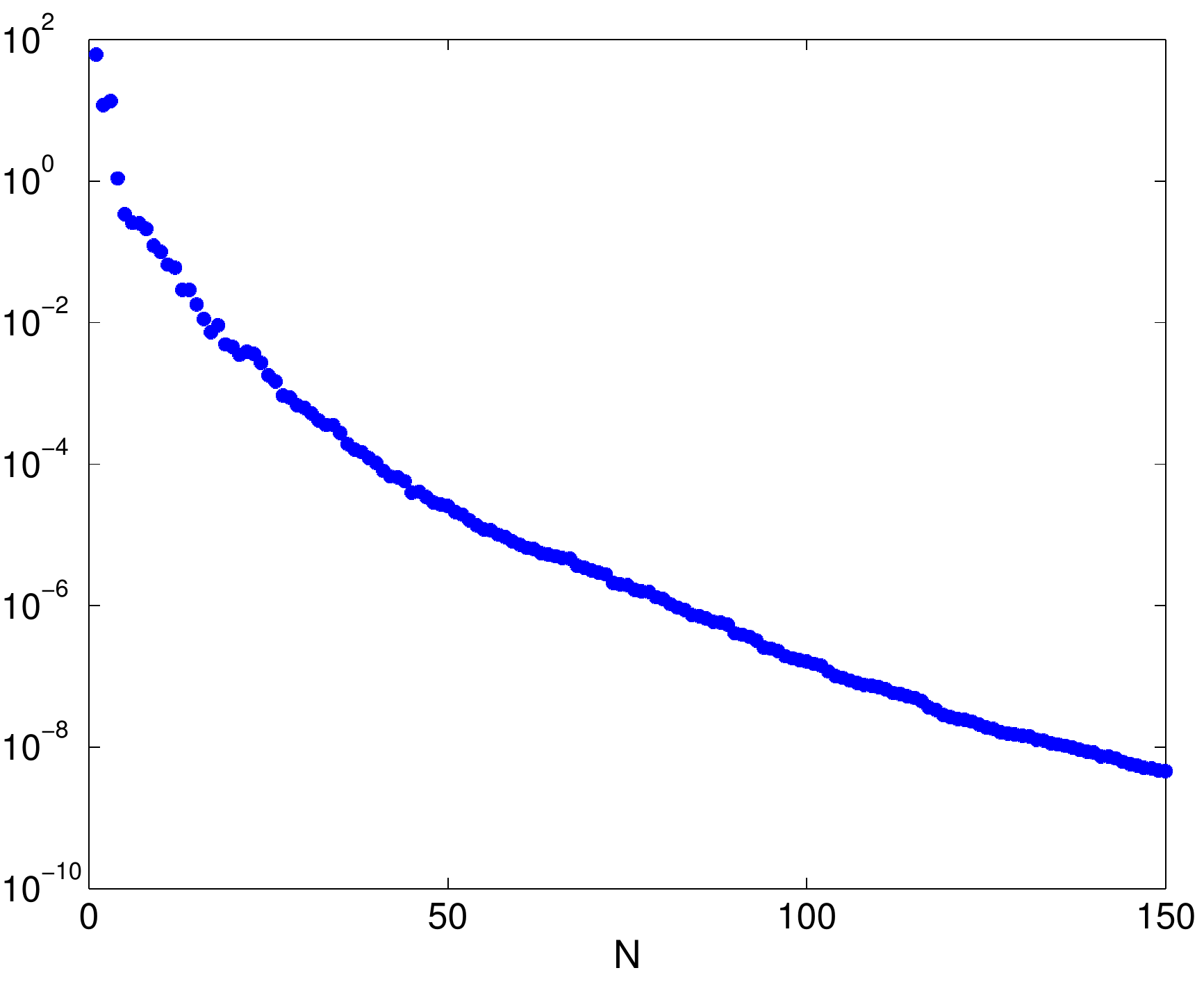}
 \includegraphics[width=0.5\linewidth]{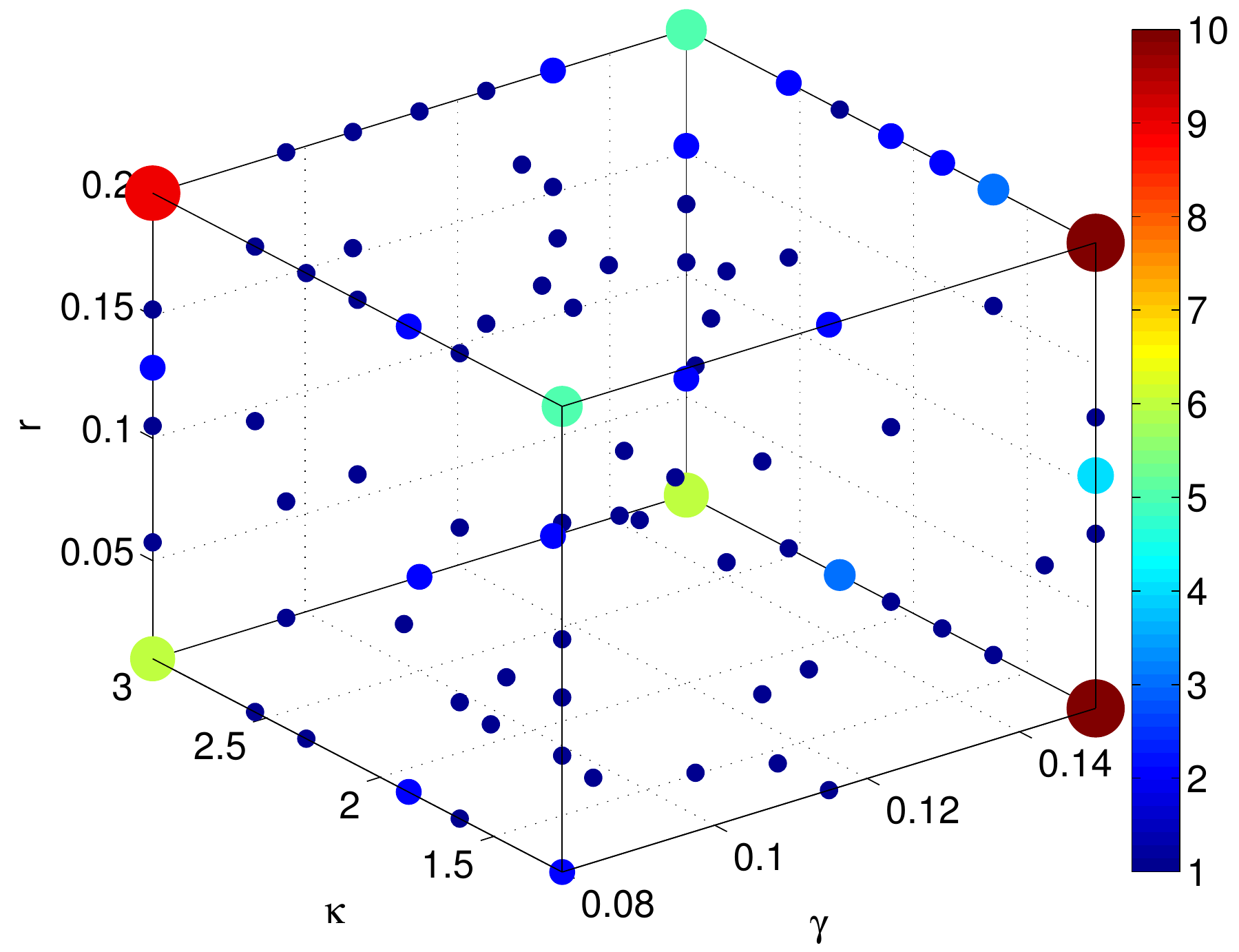}
 \caption{Left: Evolution of $\max_{\mu\in\mathcal{P}_{test}}\left\{E^{true}_{L^2}(\mu)\right\}$ for the European call option with the Heston model with $\mu=(\gamma,\kappa,r)$. The test grid is random $|{P}_{test}|=1024$. Right: Plot of the selected parameters $\mu_1,...,\mu_N\in\mathcal{P}_{train}$ and their frequency of the selection in the construction of the reduced basis in Algorithm~\ref{alg:global-greedy}. The train set composed of $|\mathcal{P}_{train}|=9^3=729$ equidistantly distributed points.}
\label{Fig:RB_3D_error}
\end{figure}
\begin{figure}
  \includegraphics[width=0.5\linewidth]{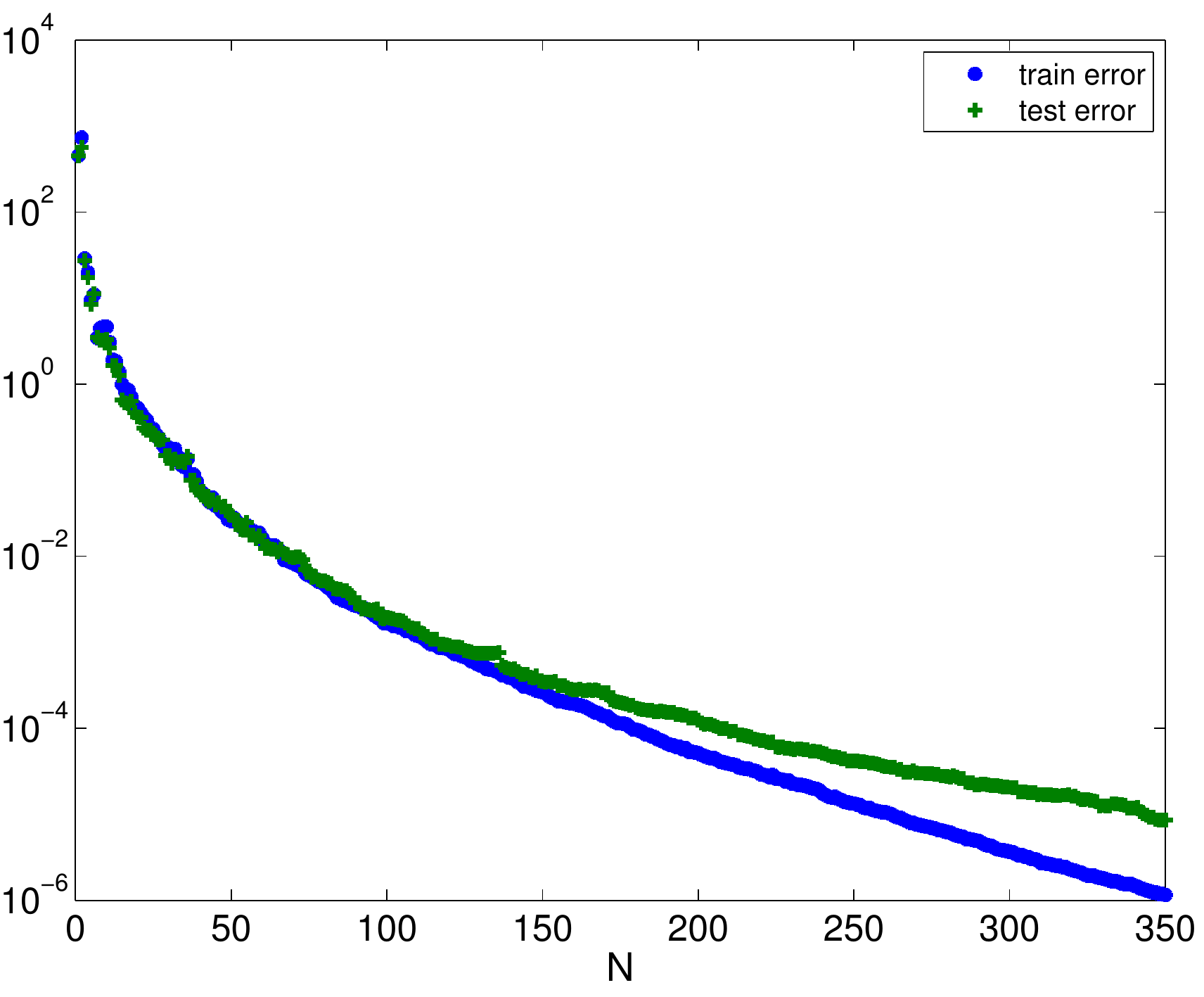}\centering
 \caption{Left: Evolution of the train error $\max_{\mu\in\mathcal{P}_{train}}\left\{E^{true}_{L^2}(\mu)\right\}$(blue stars) and a  test error $\max_{\mu\in\mathcal{P}_{test}}\left\{E^{true}_{L^2}(\mu)\right\}$(green crosses) for the European call option with the Heston model with $\mu=(\xi,\rho,\gamma,\kappa,r)$. The test grid is random $|{P}_{test}|=10000$. The train set composed of $|\mathcal{P}_{train}|=6^5=7776$ equidistantly distributed points.}
\label{Fig:RB_5D_error}
\end{figure}

\subsection{American options}

In this section, we present the numerical results corresponding to the performance of the reduced basis approach for pricing American options with the Black-Scholes and Heston model and the corresponding a posteriori error estimates. 
\subsubsection{Examples on the Heston model}\label{Sec:H_results}
For the American option case with the Heston model, we consider the settings of the parameter domain defined in~\eqref{eq:par_domHA}. For the experiments presented in this section, we consider $\mu\equiv(\gamma,\kappa)$ and, if not stated, the remaining parameter values are set to the corresponding entries of $\mu^*=(0.9, 0.21, 0.16, 3, 0.0198 )^T$.

The detailed primal solution and a corresponding Lagrange multiplier are presented in Figure~\ref{Fig:AO_exact}.  
\begin{figure} 
 \includegraphics[width=0.5\linewidth]{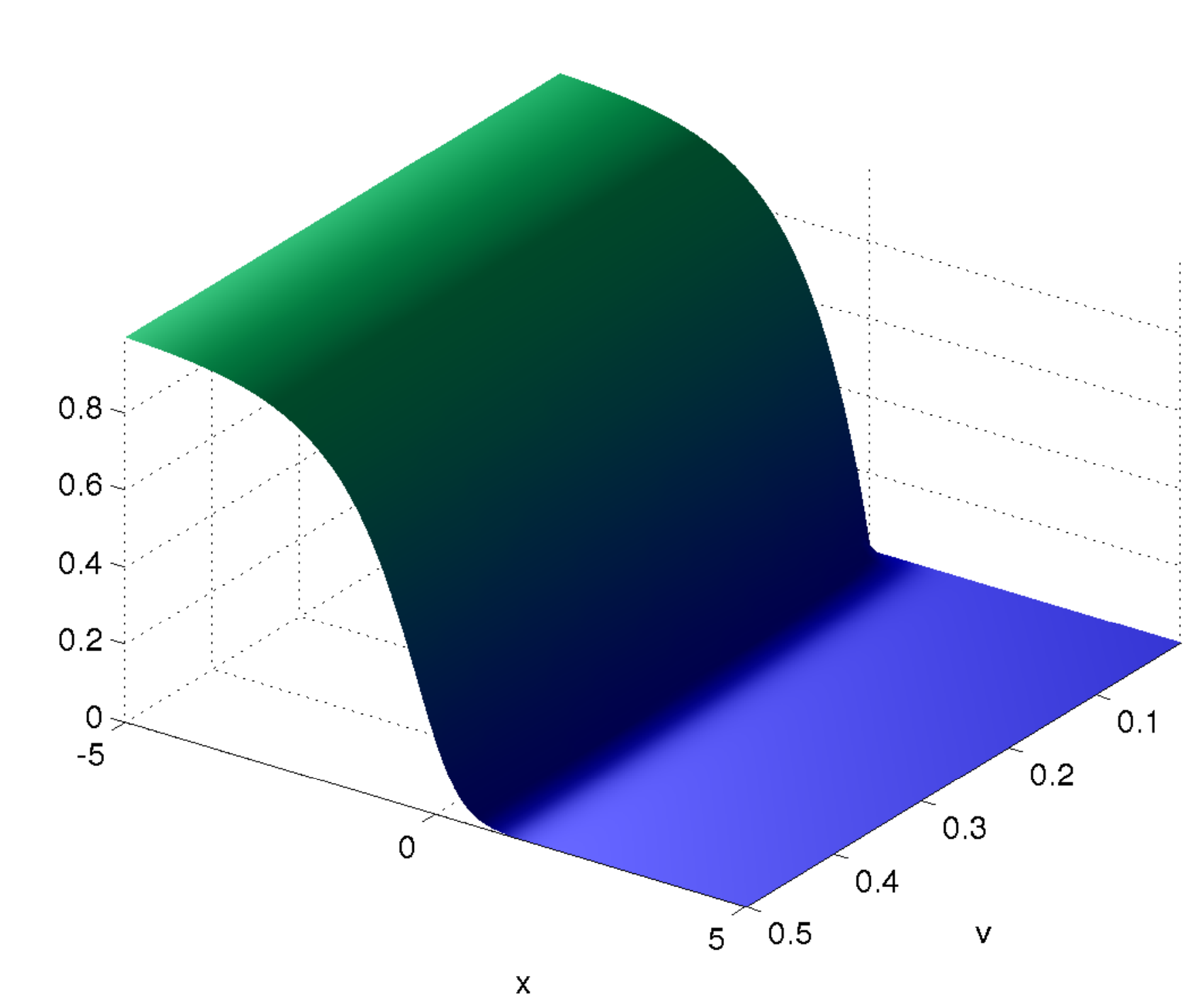}
 \includegraphics[width=0.5\linewidth]{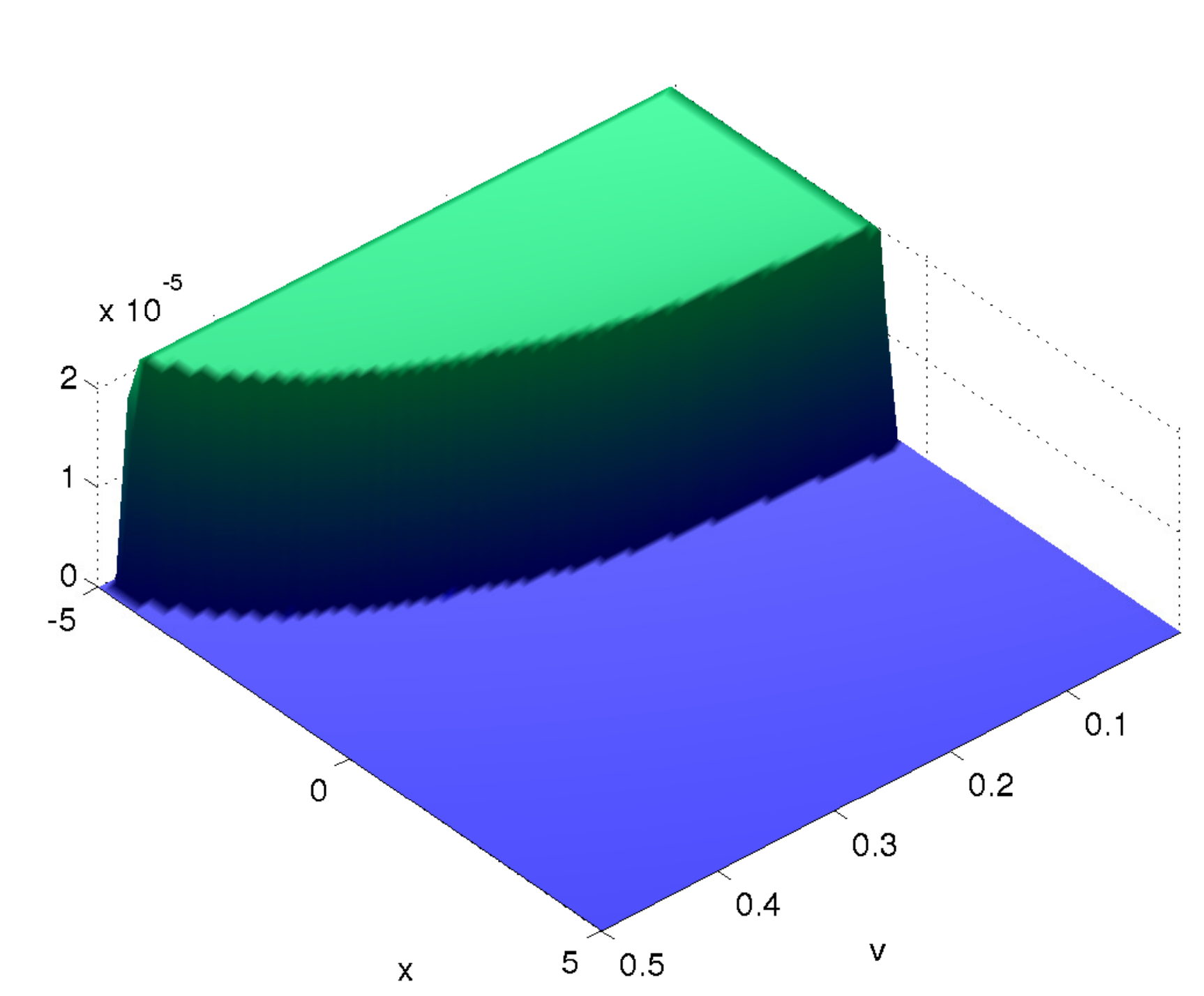}
 \caption{The solution of the American put with the Heston model for $\mu=(0.9, 0.21, 0.1750, 3, 0.0198)^T$ at $t=T$ (left) and a corresponding Lagrange multiplier (right).}
\label{Fig:AO_exact}
\end{figure}
In Figure~\ref{Fig:AO_snapshots}, we show the snapshots of the primal solutions and their Lagrange multipliers at different parameter values, which motivates us to apply the reduced basis method to this model.
\begin{figure} 
 \includegraphics[width=0.75\linewidth,height=7cm]{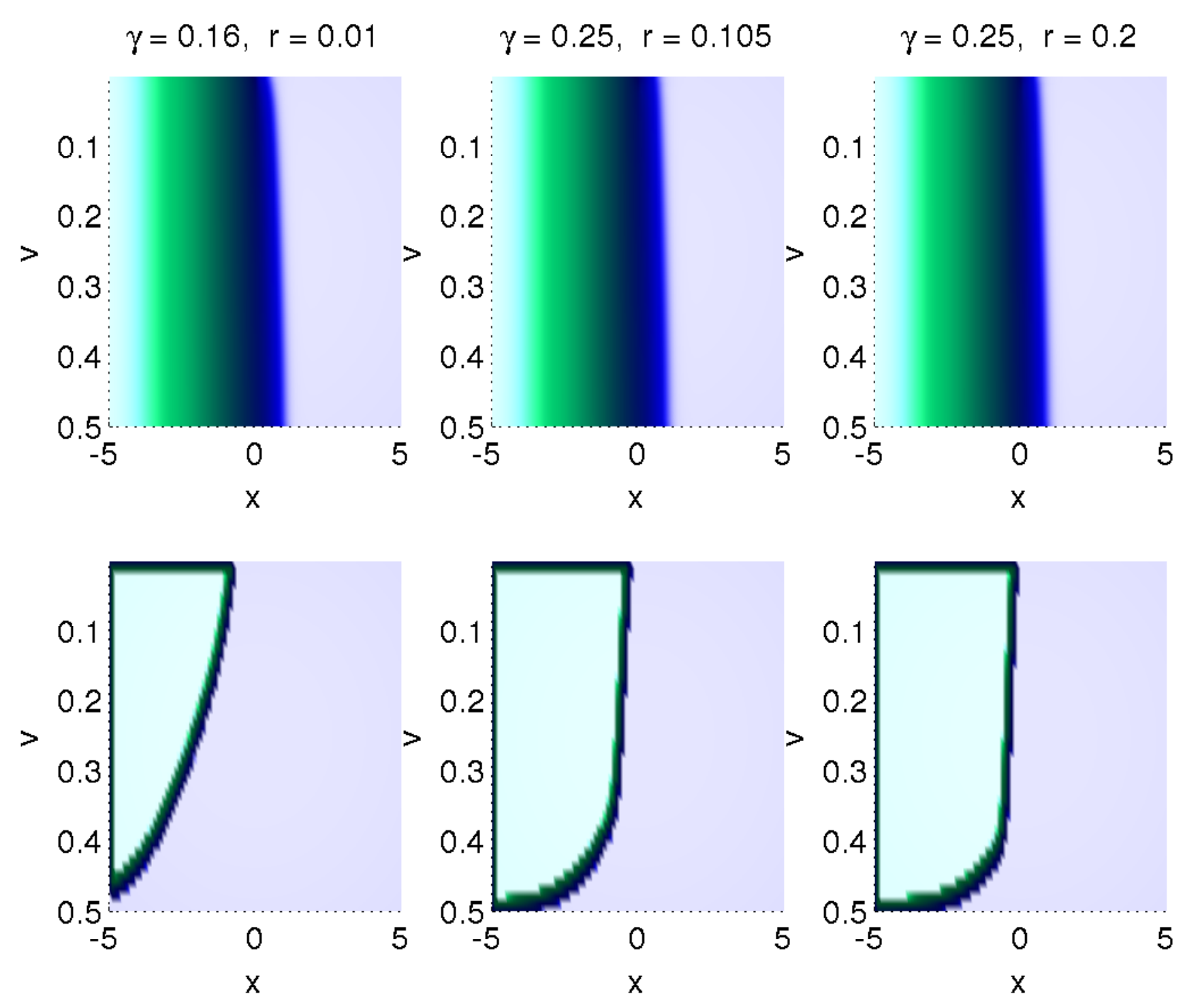}\centering
 \caption{Top: the primal solutions of the American put option with the Heston model with different values of $\gamma, r$ at the final time $t=T$ (top) and corresponding Lagrange multipliers (bottom).}
\label{Fig:AO_snapshots}
\end{figure}
To study the efficiency of the reduced basis approach, we consider the POD-Angle-Greedy algorithm with the true error indicator $E(\mu)=E_{L^2}^{true}(\mu)$. We build the hierarchical reduced basis with $N_V=70$ and $N_W=35$ using the training set $|\mathcal{P}_{train}|=7^2=49$ uniformly distributed points. The evolution of the error produced by the algorithm is depicted in Figure~\ref{Fig:AO_basis2}. Each reduced basis model is tested on the random test set $|\mathcal{P}_{test}|=200$ and the corresponding error $\max_{\mu\in\mathcal{P}_{test}}E_{L^2}^{true}(\mu)$ is also depicted in Figure~\ref{Fig:AO_basis2}. 
\begin{figure} 
 \includegraphics[width=0.5\linewidth]{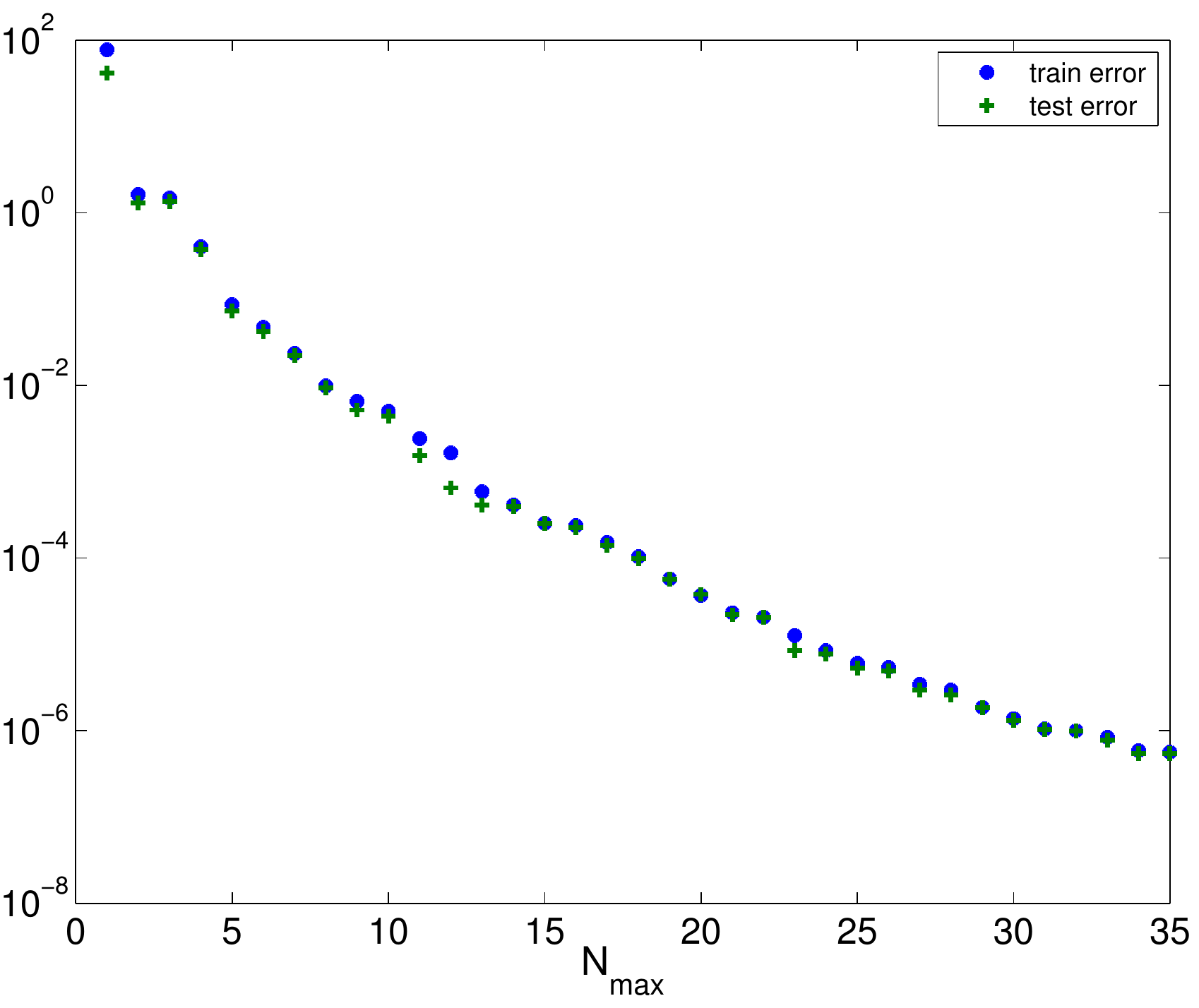}\centering
 \caption{Evolution of the  train error $\max_{\mu\in\mathcal{P}_{train}}E^{true}_{L^2}(\mu)$ during the iterations of Algorithm~\ref{alg:global-greedy} (green stars) and the test error $\max_{\mu\in\mathcal{P}_{test}}E^{true}_{L^2}(\mu)$ (blue crosses) for the American put with the Heston model.  }
\label{Fig:AO_basis2}
\end{figure}
Similar to the European option case, we observe a good approximation property of the reduced basis method and an exponential convergence of the error.
\subsubsection{Examples on the Black-Scholes model}\label{Sec:BS_results}
Now, we restrict ourselves to the Black-Scholes model. We first demonstrate the parameter and time dependence of our model. In Figure~\ref{Fig:BS_snapshots}, the examples of the primal and dual solutions at different parameter values $\sigma$ and time steps  are presented. 
\begin{figure}[!h]
\centering
 \includegraphics[width=0.32\linewidth]{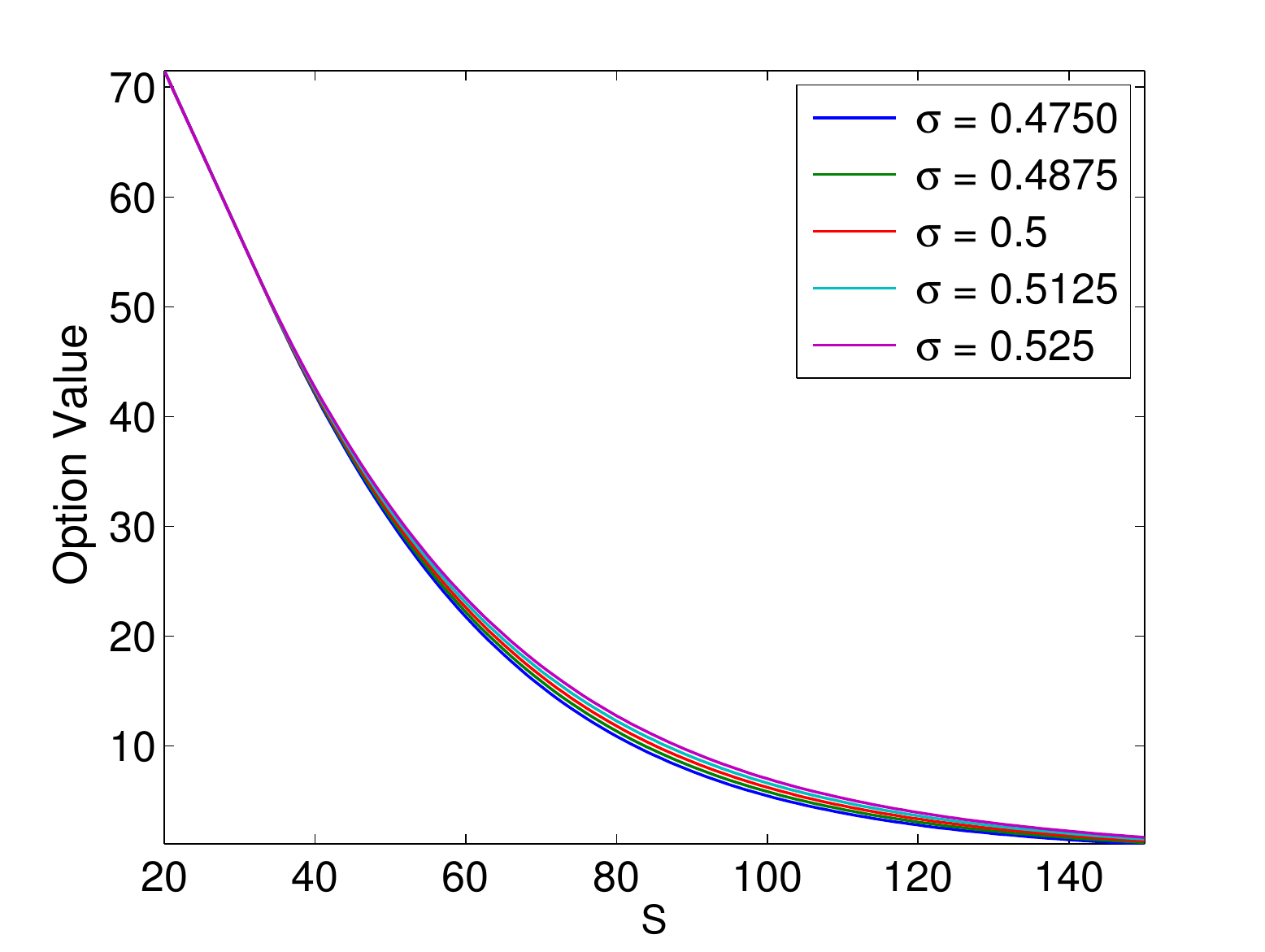}
  \includegraphics[width=0.32\linewidth]{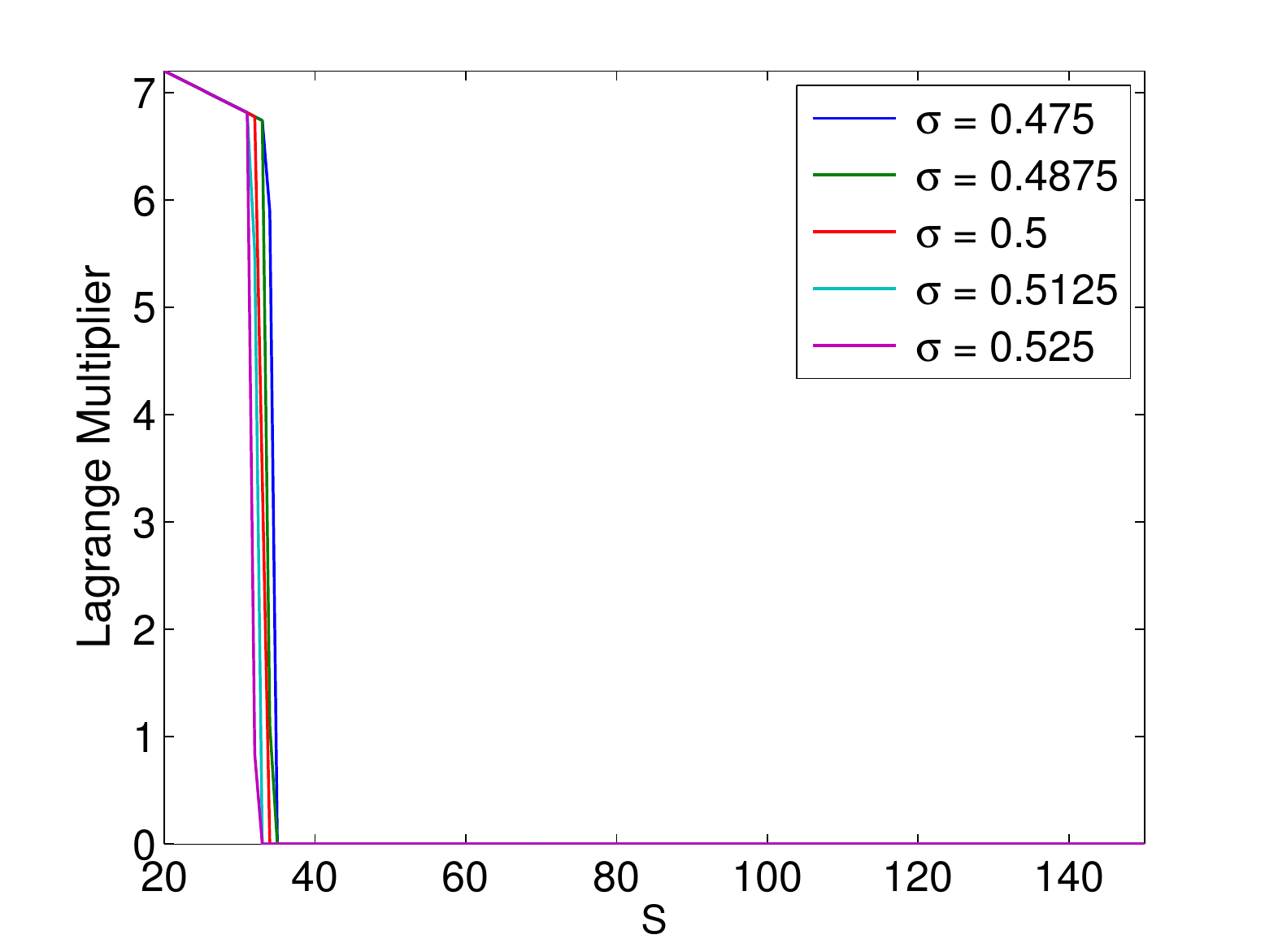}
   \includegraphics[width=0.32\linewidth]{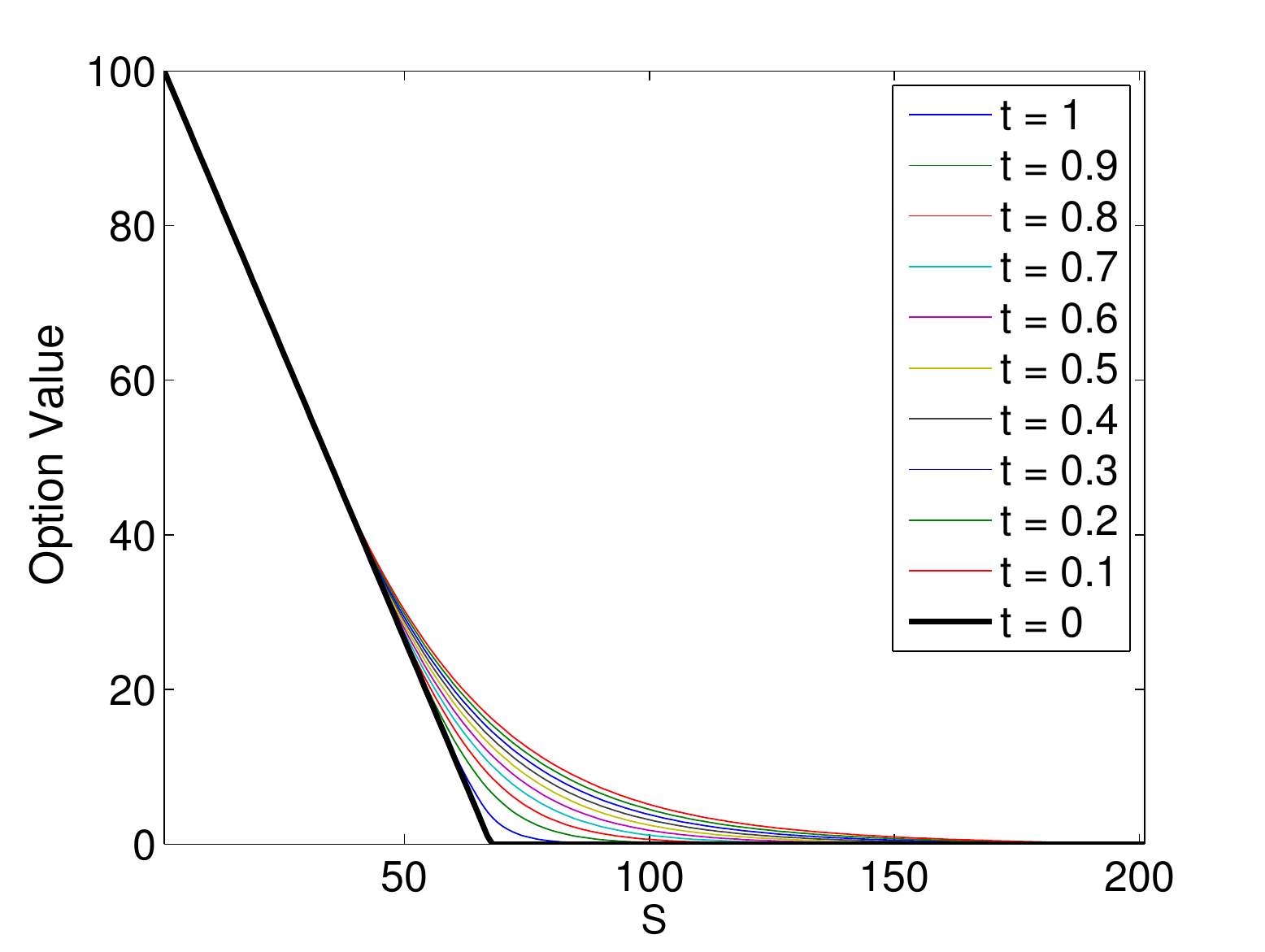}
 \caption{The primal solutions (left) and corresponding Lagrange multipliers (middle) of the American put option with the Black-Scholes model with different values of the parameter $\sigma$ at a final time step $t=T$. The evolution of the solution for different times (right). }
\label{Fig:BS_snapshots}
\end{figure}

As the next set of experiments, we consider the performance of the reduced basis approach using Algorithm~\ref{alg:global-greedy} with the true energy error  $E(\mu)=E_{energy}^{true}(\mu)$. 
We construct the bases using the proposed  algorithm with $N_V=50$, $N_W=25$ and test the reduced basis approach first for a random value of the parameter $\mu=(4.8470\cdot 10^{-2}, 7.6785 \cdot 10^{-3}, 4.1856 \cdot 10^{-1})\in{\cal P}\setminus{\cal P}_{train}$. Some steps of the simulation are represented in
Figure~\ref{fig:simu}. The reduced basis and fine detailed simulation curves are hardly distinguishable, that reveals a good reduced basis approximation.
\begin{figure}[!h]
\centering
\includegraphics[width=0.31\linewidth]{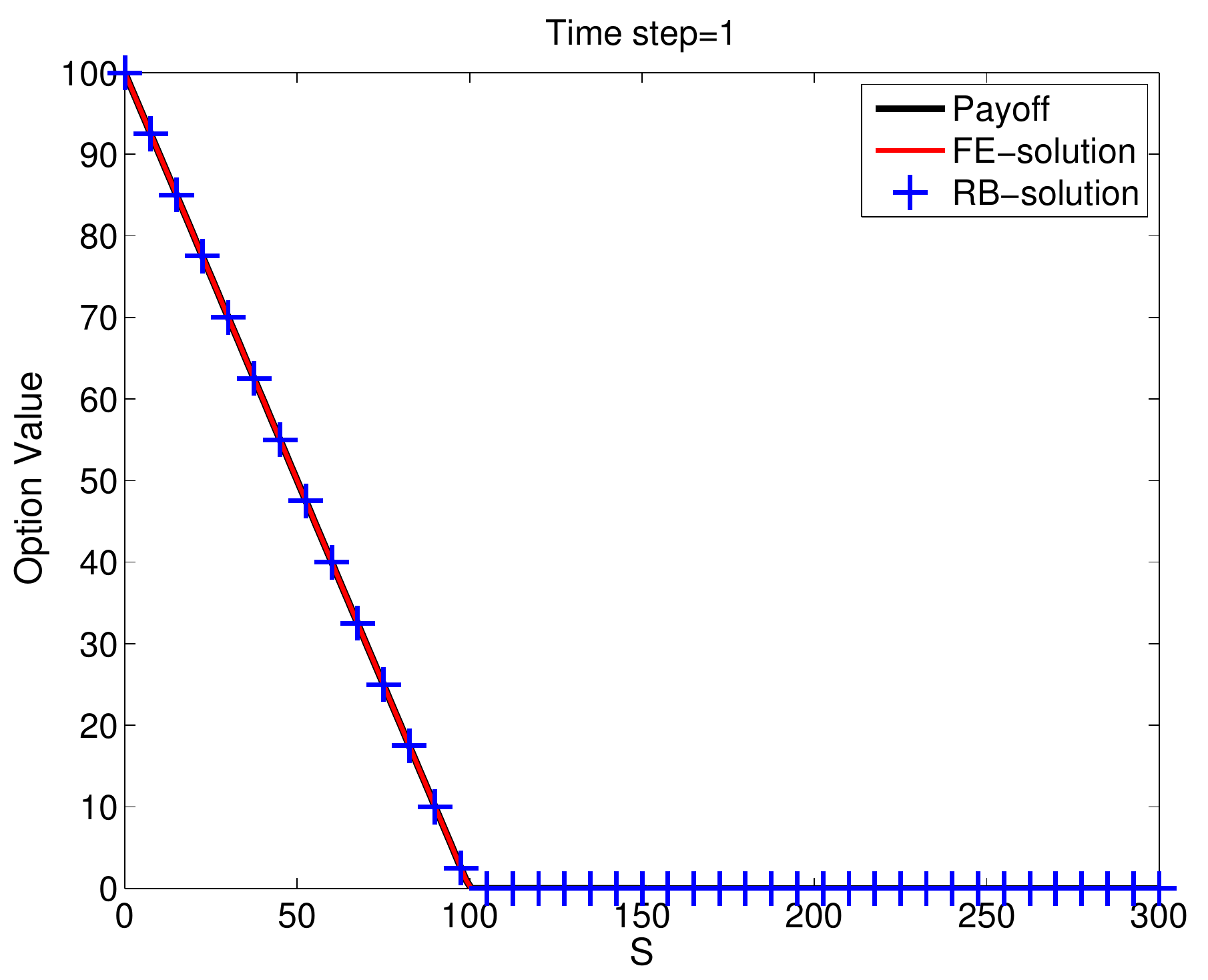}
\includegraphics[width=0.31\linewidth]{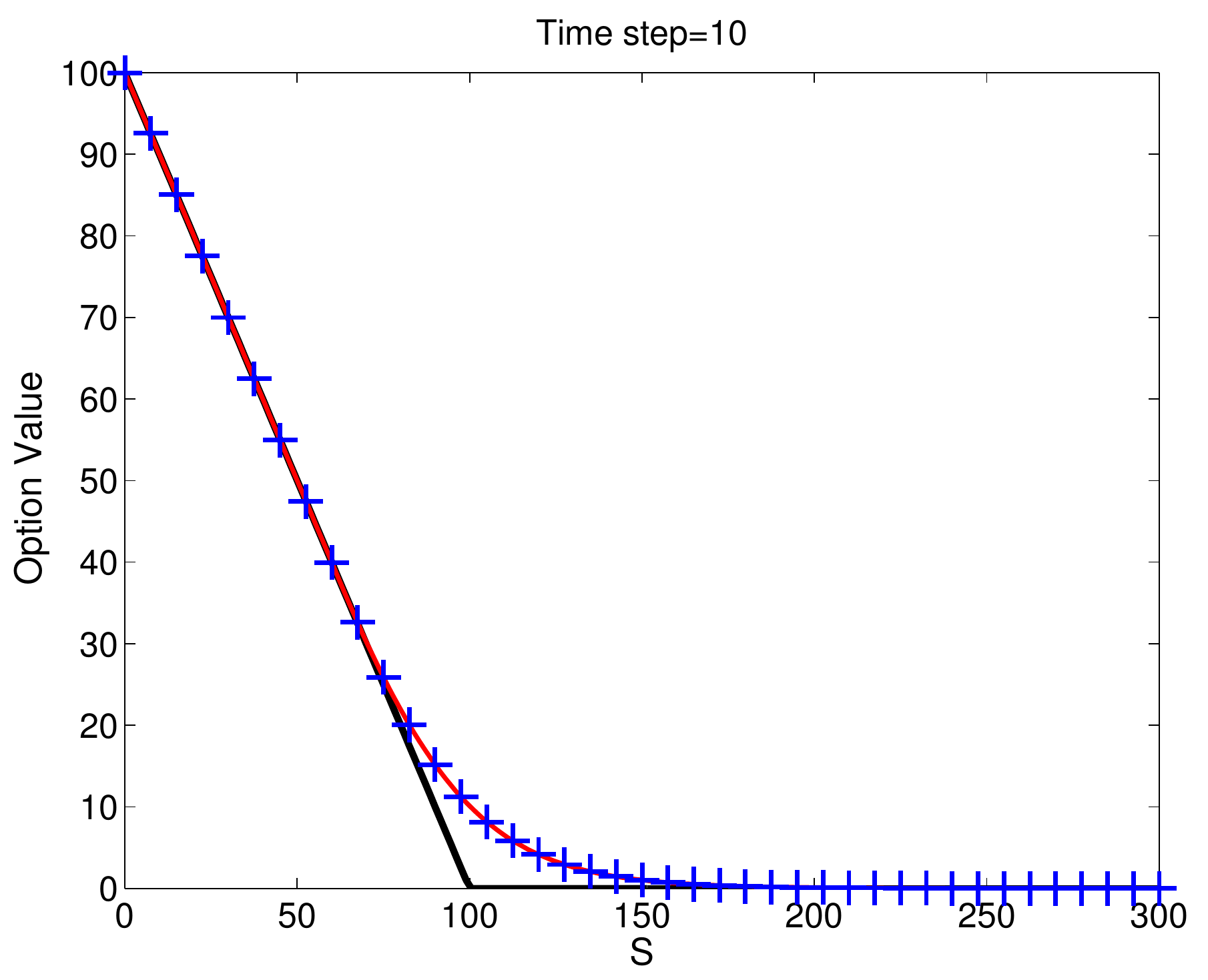}
\includegraphics[width=0.31\linewidth]{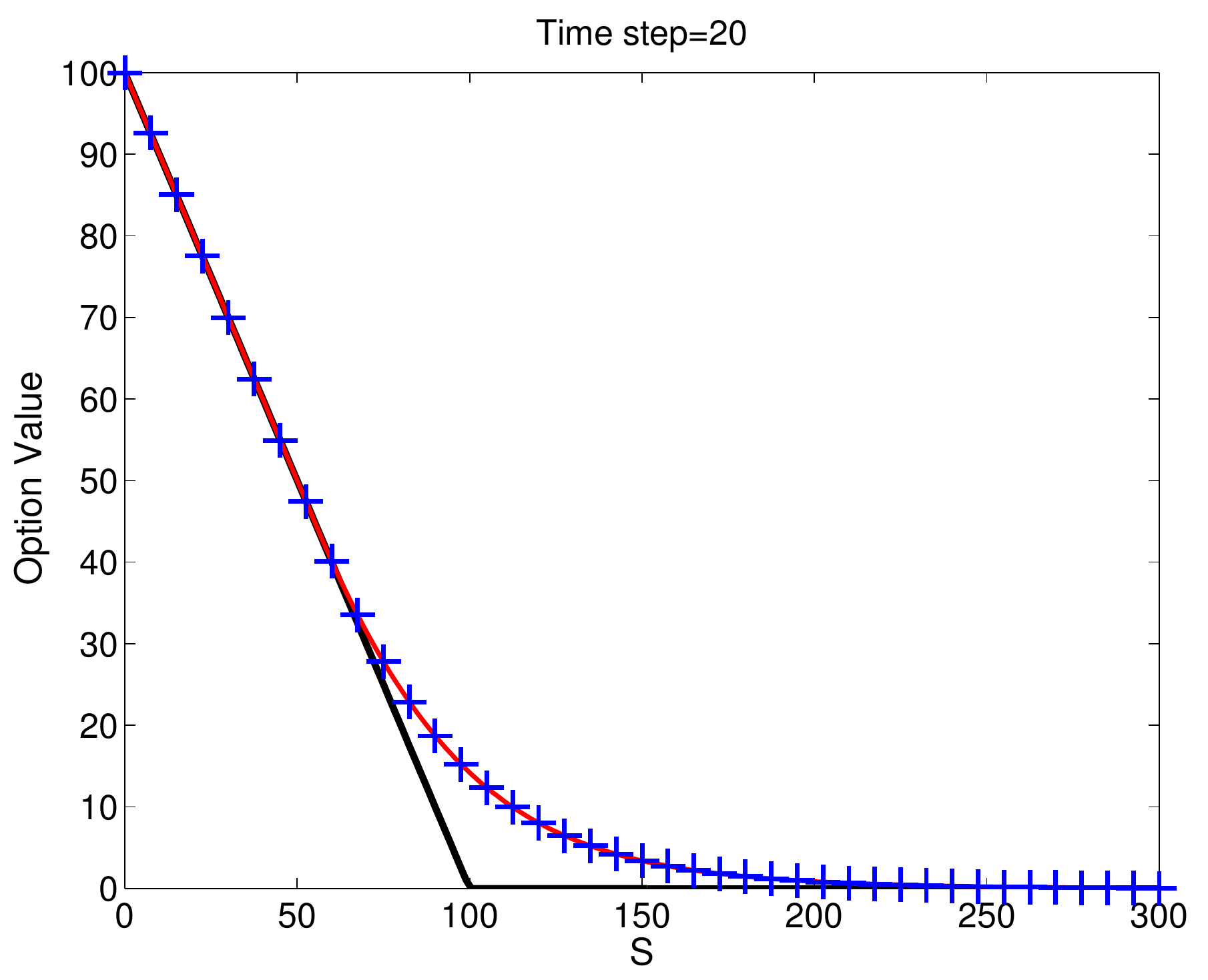}
\caption{A finite element (solid red line) and a reduced basis
  approximation (blue $+$) at time steps $t/\Delta t=1$, $t/\Delta
  t=10$ and $t/\Delta t=T/\Delta t=20$ for the American put option with the Black-Scholes model, with $\mu=(4.8470\cdot 10^{-2}, 7.6785 \cdot 10^{-3}, 4.1856 \cdot 10^{-1})^T$. The payoff function is represented as the black line.}
\label{fig:simu}
\end{figure}

We then test our algorithm on a larger set of parameters.
We consider ${\cal P}_{test}\subset {\cal P}$, a random set of
$|\mathcal{P}_{test}|=20$ parameters and estimate the mean value of the error $E_{energy}^{true}(\mu)$ over ${\cal P}_{test}$.
More precisely, we evaluate $\frac{1}{|{\cal P}_{test}|}\sum_{\mu\in{\cal P}_{test}}E_{energy}^{true}(\mu)$.  
The results are plotted in the diagram presented in Figure~\ref{fig:ErrorCurve}. 
We observe an error decay of several orders in magnitude when simultaneously
increasing $N_V$ and $N_W$. 
\begin{figure}
 \includegraphics[width=0.5\linewidth]{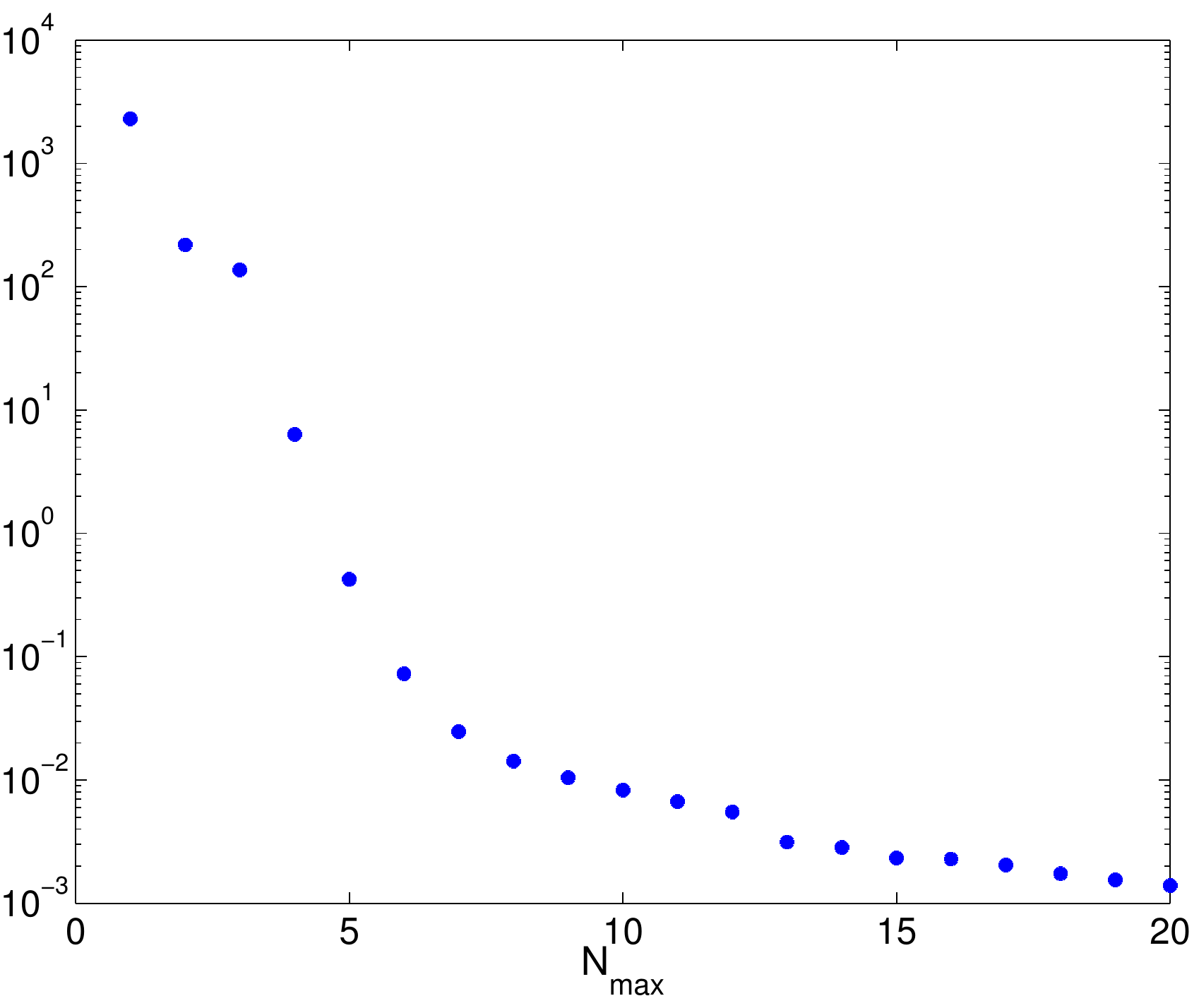}\centering
\caption{Values of $\frac{1}{|{\cal P}_{test}|}\sum_{\mu\in{\cal P}_{test}}E_{energy}^{true}(\mu)$ when using various hierarchical bases obtained with Algorithm~\ref{alg:global-greedy} for the American put option with the Black-Scholes model.}
\label{fig:ErrorCurve}
\end{figure}
\subsubsection{Efficiency of a posteriori error estimates}
 We now focus on the efficiency of the design procedure and an inclusion of an a posteriori error estimate using as an example the American put option with the Black-Scholes and Heston model. For this, we consider the POD-Angle-Greedy algorithm and use alternatively $E(\mu)=E_{energy}^{true}(\mu)$ and $E(\mu)=E_{energy}^{Apost}(\mu)$ as selection criterion with $N_{\max}=25$, i.e., $N_V=50$, $N_W=25$ for both models. The other parameter settings remain the same as introduced in Section~\ref{Sec:BS_results} and Section~\ref{Sec:H_results}.
The visualization of the reduced basis vectors of $\Psi_N$, $\Xi_N$ for the Black-Scholes model with $E(\mu)=E_{energy}^{Apost}(\mu)$ is
represented in Figure~\ref{Fig:Basis_BS}. 
\begin{figure}
\includegraphics[width=.5\linewidth]{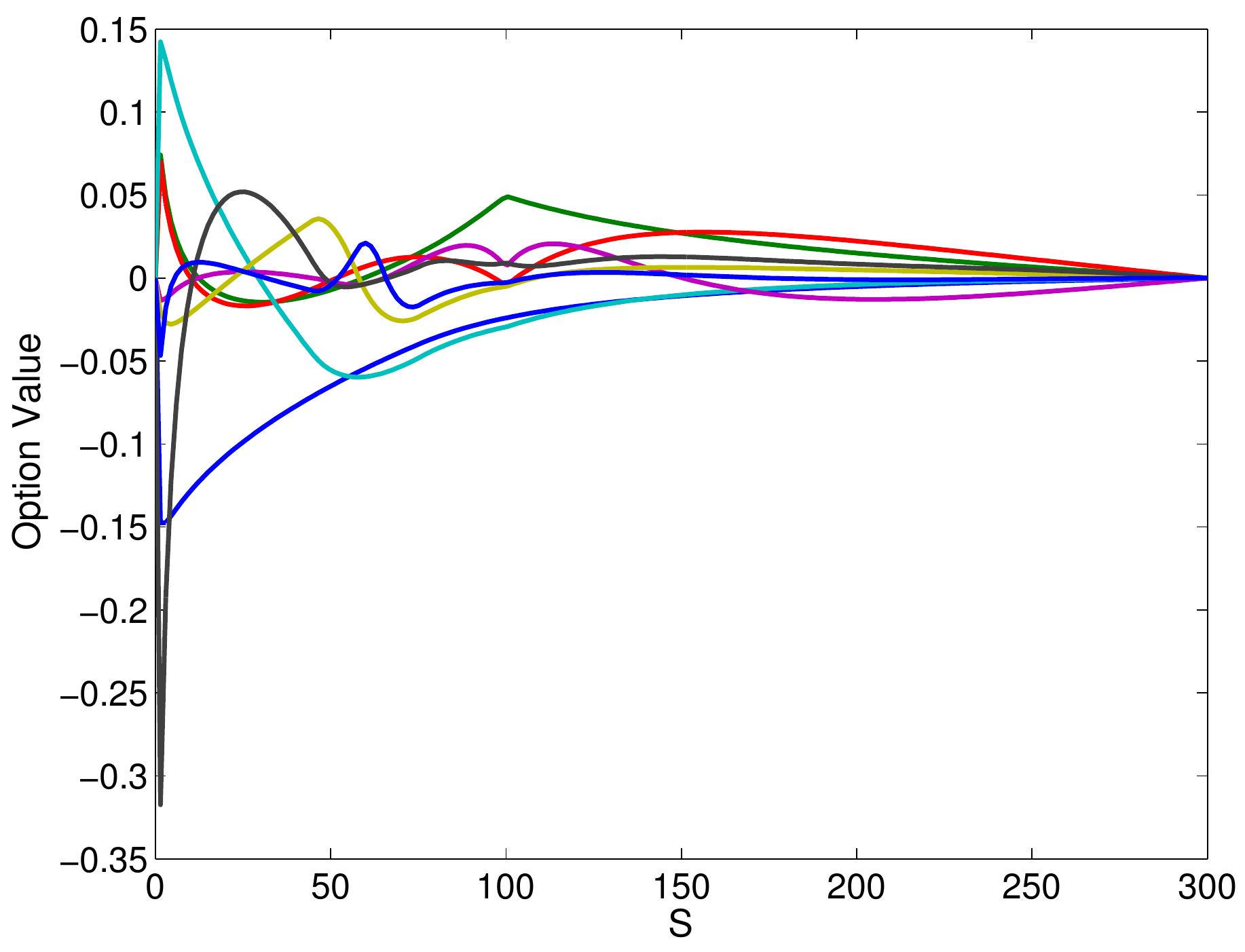}
\includegraphics[width=.5\linewidth]{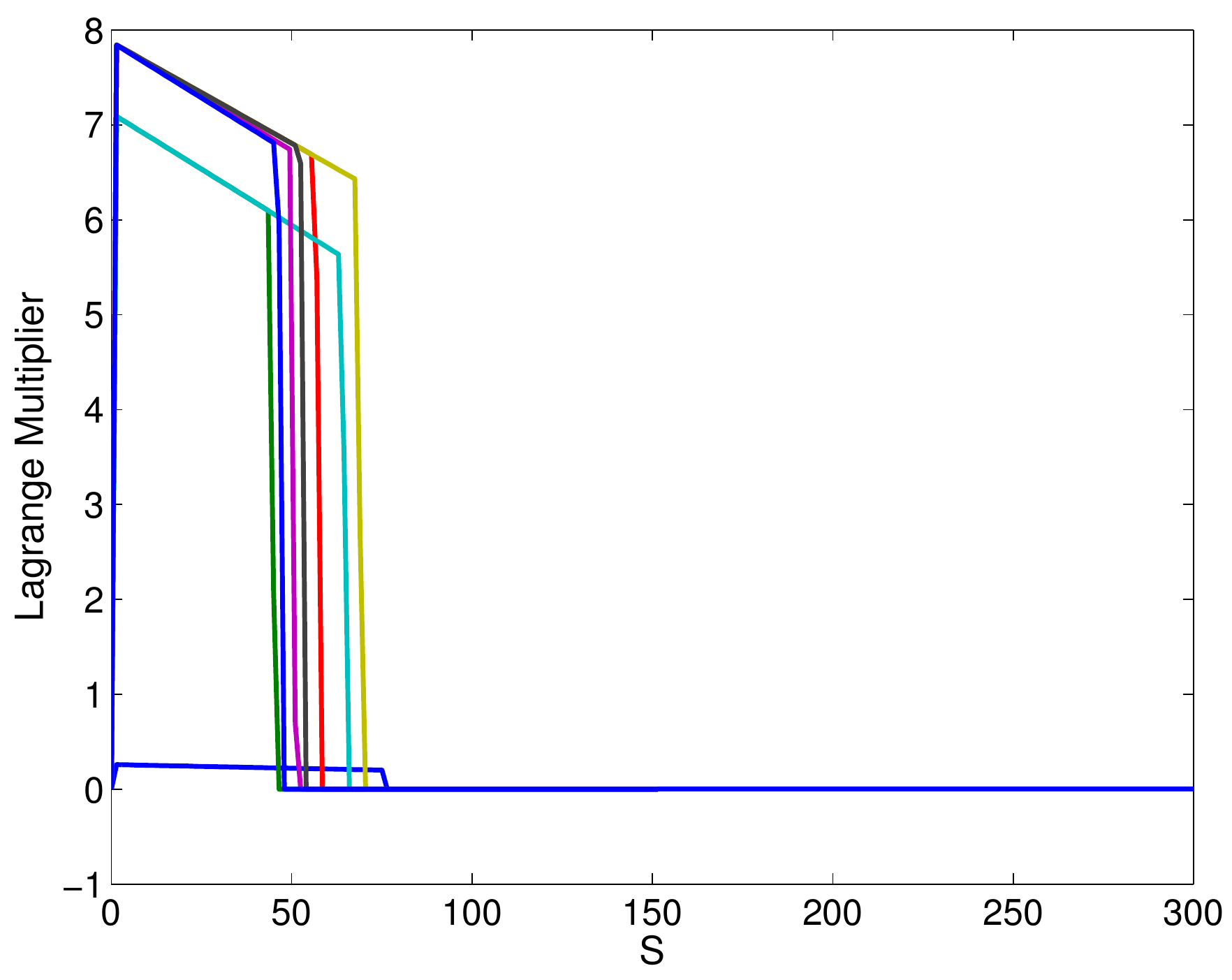}
\caption{The eight first vectors of the primal (left) and dual (right) bases obtained with Algorithm~\ref{alg:global-greedy} with $E(\mu)=E_{energy}^{Apost}(\mu)$ for the American put option with the Black-Scholes model.}
\label{Fig:Basis_BS}
\end{figure}

We now comment on an employment of the a posteriori error bounds developed in  Section~\ref{sec:error_analysis}. We consider the values of the quantity $\max_{\mu\in\mathcal{P}_{train}}E(\mu)$ 
along the iterations of Algorithm~\ref{alg:global-greedy}, choosing either 
$E(\mu)=E_{energy}^{true}(\mu)$ or $E(\mu)=E_{energy}^{Apost}(\mu)$. The corresponding results for the Black-Scholes model are presented in Figure~\ref{Fig:convGreedy} and for the Heston model in Figure~\ref{Fig:convGreedyH}.
We observe that for both models using $E(\mu)=E_{energy}^{true}(\mu)$ as an error measure in Algorithm~\ref{alg:global-greedy} results in less monotone error convergence of $E_{energy}^{Apost}(\mu)$ with respect to $N_{\max}$ and vice versa. Overall we obtain similar accuracy when using the (cheap) $E_{energy}^{Apost}(\mu)$ measure in contrast to the expensive true error $E_{energy}^{true}(\mu)$, which illustrates the relevance of our a posteriori analysis.
\begin{figure}
\includegraphics[width=0.5\linewidth]{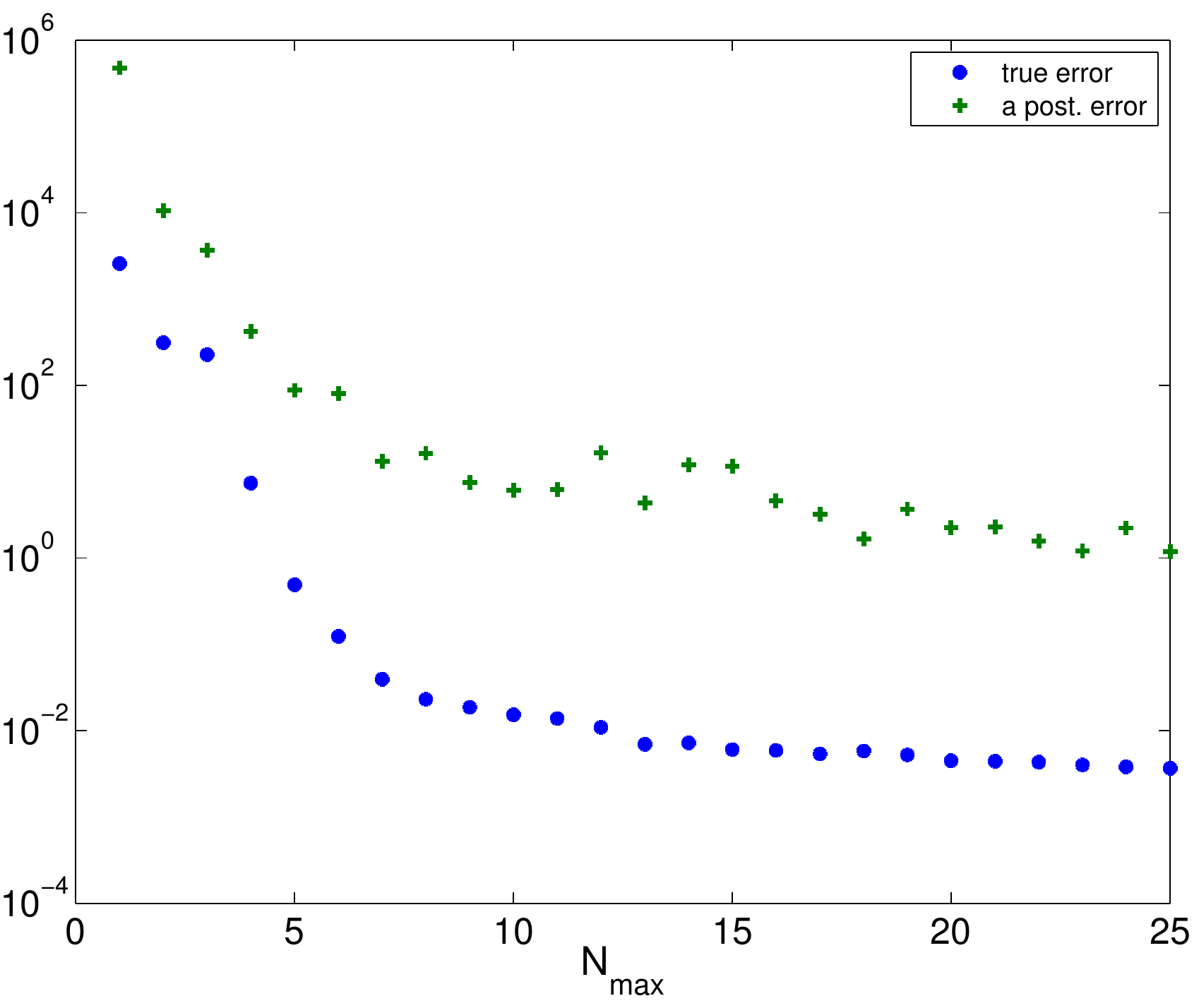}
\includegraphics[width=0.5\linewidth]{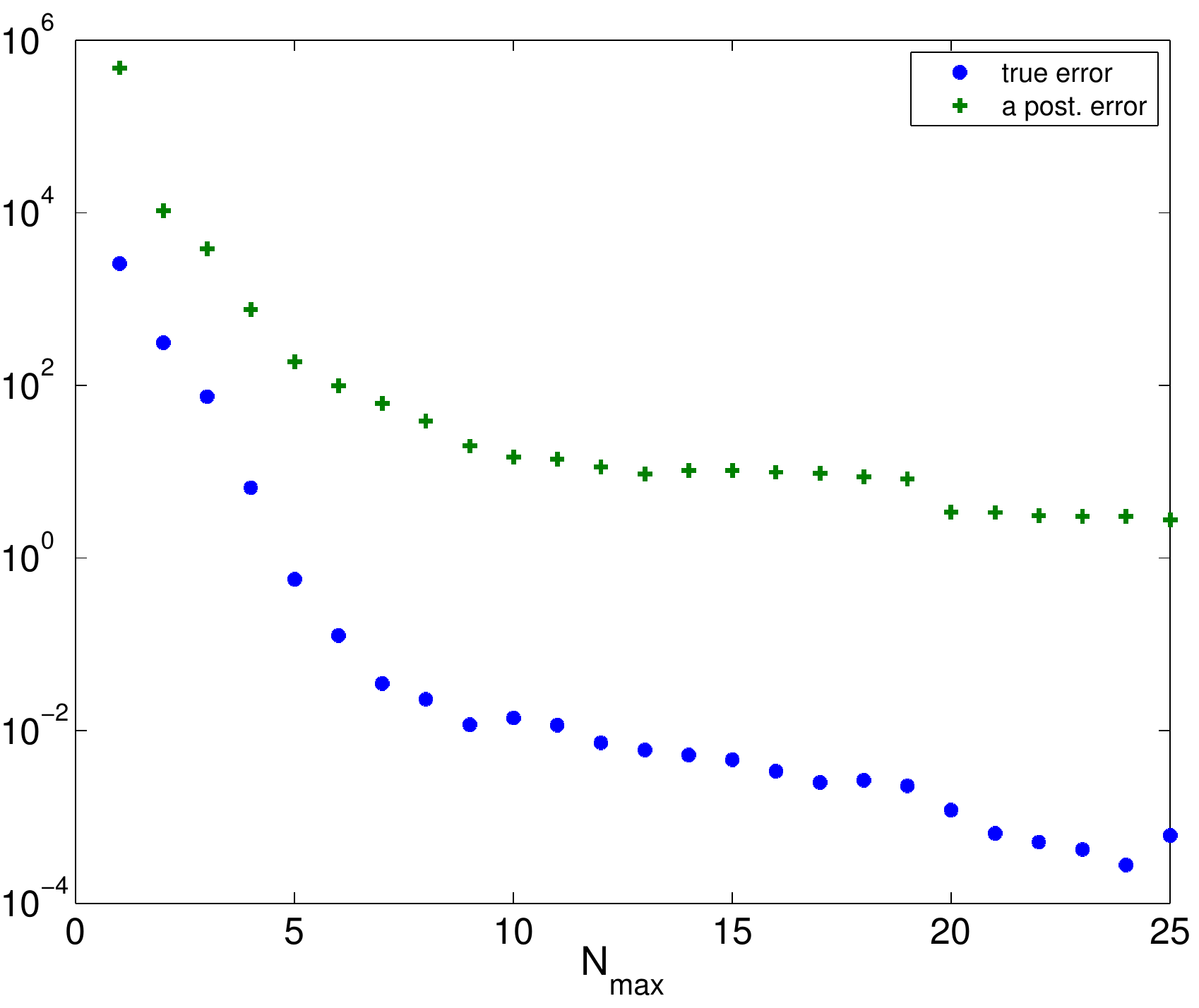}
\caption{Evolution of the train error $\max_{\mu\in\mathcal{P}_{train}}E(\mu)$ for the American put option with the Black-Scholes model during the iterations of Algorithm~\ref{alg:global-greedy}  with error estimator $E(\mu)=E_{energy}^{true}(\mu)$ (left) and with $E(\mu)=E_{energy}^{Apost}(\mu)$ (right). Blue stars: values of $E_{energy}^{true}(\mu)$, green crosses: values of $E_{energy}^{Apost}(\mu)$.}\label{Fig:convGreedy}
\end{figure}
\begin{figure}
\includegraphics[width=0.5\linewidth]{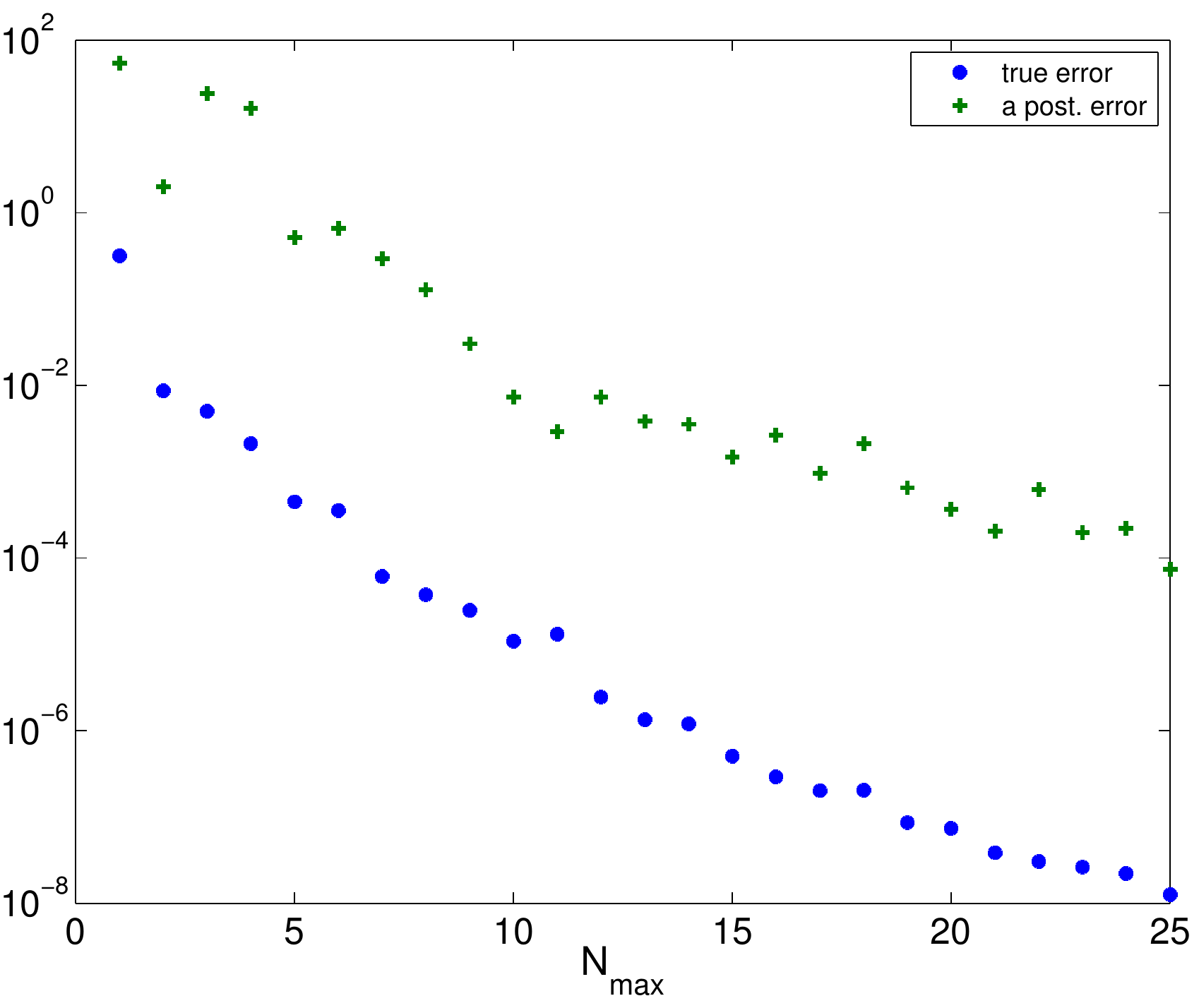}
\includegraphics[width=0.5\linewidth]{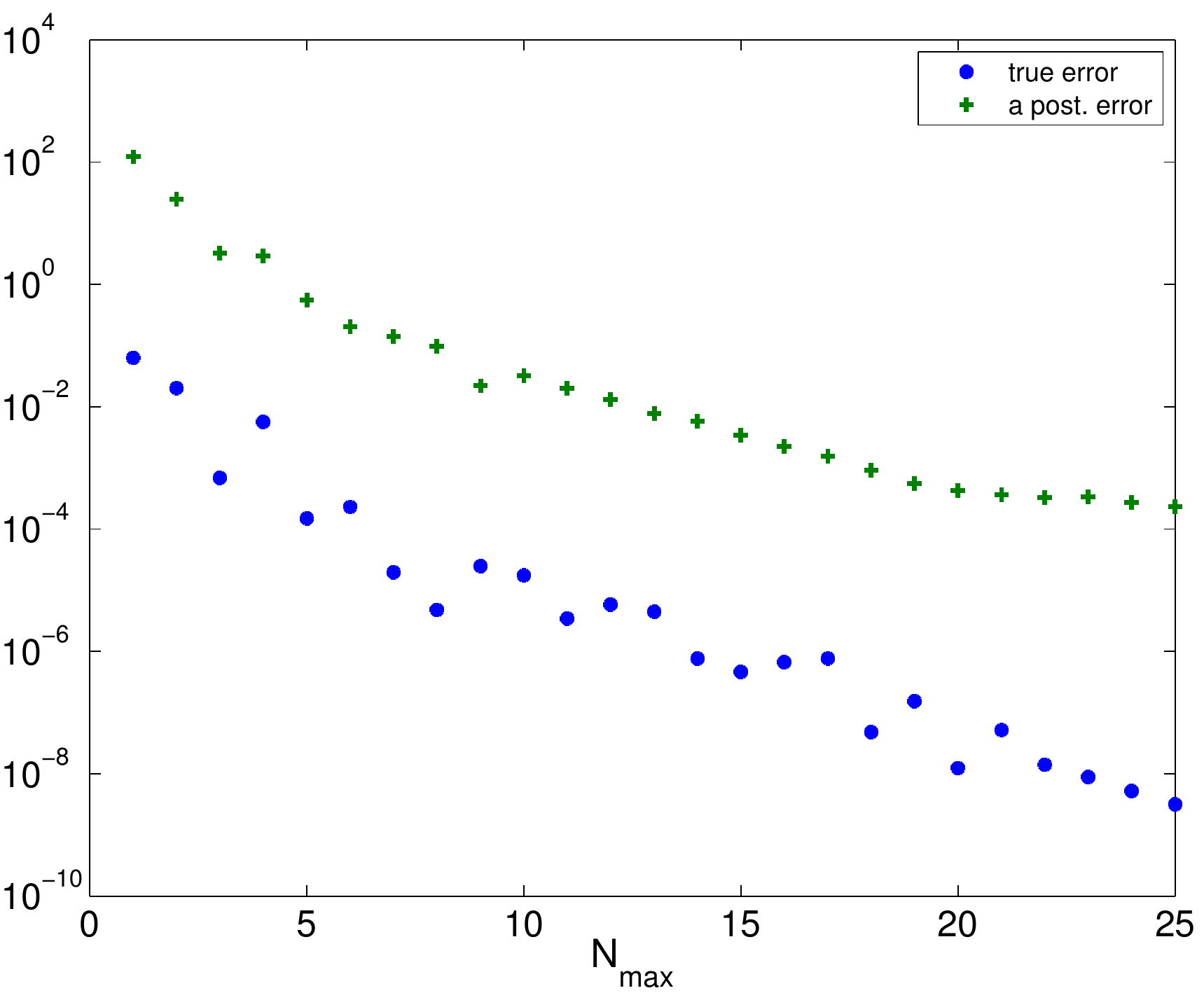}
\caption{Evolution of the train error $\max_{\mu\in\mathcal{P}_{train}}E(\mu)$ for the American put option with the Heston model during the iterations of Algorithm~\ref{alg:global-greedy} with error estimator $E(\mu)=E_{energy}^{true}(\mu)$ (left) and with $E(\mu)=E_{energy}^{Apost}(\mu)$ (right). Blue stars: values of $E_{energy}^{true}(\mu)$, green crosses: values of $E_{energy}^{Apost}(\mu)$.}\label{Fig:convGreedyH}
\end{figure}

As a measure of the quality of the proposed error estimate, we define the associated effectivities
\begin{equation}
 \eta_N(\mu)=\sqrt{\frac{E_{energy}^{Apost}(\mu)}{E_{energy}^{true}(\mu)}}.
\end{equation}
Table~\ref{Table:BS} and Table~\ref{Table:Heston} provide the maximum effectivities $\max_{\mu\in\mathcal{P}_{train}}\eta_N(\mu)$ associated with the error bounds for the Black-Scholes and Heston model using different error measures in the POD-Angle-Greedy algorithm. Note, that the choice of $E(\mu)$ in the algorithm does not have a significant impact on the values of the maximum effectivities. We also observe that for the Heston model the effectivity values are higher than for the Black-Scholes one, which can be justified by a more complex nature of the model. Overall the effectivities of two orders of magnitude are very well acceptable for instationary RB problems.
\begin{table}[ht]
\centering
The choice of $E(\mu)=E_{energy}^{Apost}(\mu)$\\[4pt]
\begin{tabular}{|c|c|c|c|c|c|}                         
\hline                                                 
  $N_{\max}$ & $(N_V, N_W)$ & $n = 5$ & $n = 10$ & $n = 15$ & $n = 20$ \\   
\hline     \hline                                            
4 & $(8, 4)$ & 4.9e+00 & 5.0e+00 & 6.2e+00 & 1.1e+01 \\ 
\hline                                                 
8 & $(16, 8)$ & 4.5e+01 & 4.3e+01 & 4.3e+01 & 4.2e+01 \\ 
\hline                                                 
16 & $(32, 16)$ & 6.4e+01 & 6.2e+01 & 6.4e+01 & 6.5e+01 \\
\hline                                                 
20 & $(40, 20)$ & 1.2e+01 & 7.1e+01 & 7.9e+01 & 8.5e+01 \\
\hline                                                 
24 & $(48, 24)$ & 6.4e+01 & 6.7e+01 & 7.4e+01 & 1.0e+02 \\
\hline                                                 
\end{tabular}                                                                                    
\\[8pt]
The choice of $E(\mu)=E_{energy}^{true}(\mu)$\\[4pt]
\centering 
\begin{tabular}{|c|c|c|c|c|c|}                         
\hline                                                 
 $N_{\max}$ & $(N_V, N_W)$ & $n = 5$ & $n = 10$ & $n = 15$ & $n = 20$ \\   
\hline                                                 
4 & $(8, 4)$ & 5.4e+00 & 6.1e+00 & 7.2e+00 & 1.0e+01 \\ 
\hline                                                 
8 & $(16, 8)$ & 5.1e+01 & 5.3e+01 & 5.0e+01 & 4.8e+01 \\ 
\hline                                                 
16 & $(32, 16)$ & 7.3e+01 & 7.0e+01 & 7.0e+01 & 6.3e+01 \\
\hline                                                 
20 & $(40, 20)$ & 3.7e+01 & 5.4e+01 & 5.1e+01 & 7.0e+01 \\
\hline                                                 
24 & $(48, 24)$ & 3.8e+01 & 5.1e+01 & 4.4e+01 & 7.2e+01 \\
\hline                                                 
\end{tabular}                                          
\caption{Maximum effectivities for the American put with the Black-Scholes model at different time steps $n=5,10,15,20$ and for different error measures $E(\mu)$ and $N_{\max}=25$ in Algorithm~\ref{alg:global-greedy}. } \label{Table:BS}                  
\end{table}     

\begin{table}[ht]
\centering
The choice of $E(\mu)=E_{energy}^{Apost}(\mu)$\\[4pt]
\begin{tabular}{|c|c|c|c|c|c|}                       
\hline                                             
  $N_{\max}$& $(N_V, N_W)$ & $n = 5$ & $n = 10$ & $n = 15$ & $n = 20$ \\ 
\hline    \hline                                                 
4 & $(8, 4)$ & 4.0e+01 & 6.0e+01 & 7.1e+01 & 7.6e+01 \\ 
\hline                                                 
8 & $(16, 8)$ & 6.6e+01 & 1.1e+02 & 1.5e+02 & 1.5e+02 \\ 
\hline                                                 
16 & $(32, 16)$ & 1.2e+02 & 1.5e+02 & 1.7e+02 & 1.7e+02 \\
\hline                                                 
20 & $(40, 20)$ & 1.1e+02 & 1.1e+02 & 1.5e+02 & 1.9e+02 \\
\hline                                                 
24 & $(48, 24)$ & 1.9e+02 & 2.4e+02 & 3.8e+02 & 3.6e+02 \\
\hline                                            
\end{tabular} 
\\[8pt]
The choice of $E(\mu)=E_{energy}^{true}(\mu)$\\[4pt]
\centering
\begin{tabular}{|c|c|c|c|c|c|}                         
\hline                                                 
  $N_{\max}$ & $(N_V, N_W)$ & $n = 5$ & $n = 10$ & $n = 15$ & $n = 20$ \\   
\hline                                                 
4 & $(8, 4)$ & 4.2e+01 & 6.1e+01 & 7.6e+01 & 8.8e+01 \\ 
\hline                                                 
8 & $(16, 8)$ & 4.9e+01 & 8.7e+01 & 1.1e+02 & 1.2e+02 \\ 
\hline                                                 
16 & $(32, 16)$ & 8.6e+01 & 1.1e+02 & 1.1e+02 & 1.2e+02 \\
\hline                                                 
20 & $(40, 20)$ & 1.4e+02 & 1.5e+02 & 1.5e+02 & 1.7e+02 \\
\hline                                                 
24 & $(48, 24)$ & 1.4e+02 & 1.9e+02 & 2.0e+02 & 2.0e+02 \\
\hline                                                 
\end{tabular} 
\caption{Maximum effectivities for the American put with the Heston model at different time steps $n=5,10,15,20$ and for different error measures $E(\mu)$ and $N_{\max}=25$ in Algorithm~\ref{alg:global-greedy}. }\label{Table:Heston}
\end{table}

\end{document}